\newtheorem{theorem}{Theorem}[section]
\newtheorem{lemma}[theorem]{Lemma}
\newtheorem{corollary}[theorem]{Corollary}
\newtheorem{claim}[theorem]{Claim}
\newtheorem{conjecture}[theorem]{Conjecture}
\begin{document}

\title{Partially normal 5-edge-colorings of cubic graphs}

\vspace{3cm}
\author{Ligang Jin\footnotemark[1] ~and Yingli Kang\footnotemark[2]}
\footnotetext[1]{\small Department of Mathematics, Zhejiang Normal University. Yingbin Road 688, 321004 Jinhua, China. Email: \tt ligang.jin@zjnu.cn.}

\footnotetext[2]{\small Jinhua Polytechnic. Western Haitang Road 888, 321017 Jinhua, China. Email: \tt ylk8mandy@126.com.}

\maketitle

\begin{abstract}
{\small 
In a proper edge-coloring of a cubic graph, an edge $e$ is normal if the set of colors used by the edges adjacent to $e$ has cardinality 3 or 5.
The Petersen coloring conjecture asserts that every bridgeless cubic graph has a normal 5-edge-coloring, that is, a proper 5-edge-coloring such that all edges are normal.
In this paper, we prove a result related to the Petersen coloring conjecture.
The parameter  $\mu_3$ is a measurement for cubic graphs, introduced by Steffen in 2015. 
Our result shows that every bridgeless cubic graph $G$ has a proper 5-edge-coloring such that at least  $|E(G)|-\mu_3(G)$, which is no less than $\frac{4}{5}|E(G)|$, many edges are normal. 
This result improves on some earlier results of B\'{\i}lkov\'{a} and \v{S}\'{a}mal.}
\end{abstract}

\par\bigskip\noindent
\textbf{Keywords}: Petersen coloring conjecture, normal 5-edge-colorings, cores, cubic graphs

\section{Instruction}
This paper focuses on Jaeger's Petersen coloring conjecture \cite{Jaeger_1988}, which states that every bridgeless cubic graph has a Petersen coloring.
This conjecture is stronger than Berge-Fulkerson conjecture, and also implies some other conjectures, such as 5-cycle double cover conjecture (shortly, 5CDCC).
There are several equivalent statements to the Petersen coloring conjecture, one of them is that every bridgeless cubic graph has a normal 5-edge-coloring.
However, only few results on this conjecture is known. Here, we follow \v{S}\'{a}mal's new approach \cite{Samal_2011} that might leads to a solution to this conjecture.
For a given bridgeless cubic graph, we look for a proper 5-edge-coloring yielding normal edges as much as possible.
In other words, we color the graph ``as normal as possible'' while the conjecture asserts that we can color the graph completely normal. 
The result of B\'{\i}lkov\'{a} \cite{Hana_MThesis} targets some classes of cubic graphs and shows that, we can color a generalized prism so that $\frac{2}{3}$ of the edges are normal, and we can color a cubic graph of large girth so that almost $\frac{1}{2}$ of the edges are normal.
In this paper, we prove that every bridgeless cubic graph $G$ has a proper 5-edge-coloring such that at least $|E(G)|-\mu_3(G)$ edges are normal. By a result of Kaiser, Kr\'{a}l and Norine in \cite{Kaiser2006}, it holds $\mu_3(G)\leq \frac{1}{5}|E(G)|$. Therefore, we can guarantee a proper 5-edge-coloring of $G$ containing at least $\frac{4}{5}|E(G)|$ normal edges, which improves these earlier results.

\subsection{Petersen coloring conjecture}
Given graphs $G$ and $H$, a mapping $\phi \colon\ E(G)\rightarrow E(H)$ is an $H$-coloring of $G$ if any three mutually adjacent edges of $G$ are mapped to three mutually adjacent edges of $H$.
The mapping $\phi$ is called a \textit{Petersen-coloring} if $H$ is the Petersen graph.

Jaeger \cite{Jaeger_1988} posed the following conjecture which would imply both Berge-Fulkerson Conjecture and 5-CDCC.

\begin{conjecture}[The Petersen coloring conjecture \cite{Jaeger_1988}]
	Every bridgeless cubic graph has a Petersen-coloring.
\end{conjecture}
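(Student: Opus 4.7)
The plan is to attack the conjecture by the method of minimum counterexample, working in the equivalent normal $5$-edge-coloring formulation. Let $G$ be a bridgeless cubic graph of minimum order admitting no normal $5$-edge-coloring. My first task is to prove a battery of reducibility lemmas: $G$ is cyclically $4$-edge-connected (any $2$- or $3$-edge-cut splits $G$ into two smaller bridgeless cubic graphs, each normally $5$-colorable by minimality, and the two colorings can be glued after a suitable permutation of colors across the cut); $G$ has girth at least $5$; and more generally $G$ contains no ``extendable'' subgraph whose replacement by a smaller gadget yields a normally colorable graph from which a coloring lifts back. Each reduction is handled by a finite case analysis of how the colors on the boundary of the cut can be matched.

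With the minimum counterexample in hand, I would then invoke the main result of this paper to produce a proper $5$-edge-coloring $\phi$ of $G$ whose set $B$ of non-normal edges satisfies $|B|\le \mu_3(G)\le |E(G)|/5$. The goal is to show that the reducibility established above forces $B=\emptyset$, i.e.\ $\mu_3(G)=0$ for any minimum counterexample. The engine is local: every non-normal edge of $\phi$ sits inside a small color pattern on a ball of radius one, and I would attempt to show that any such pattern is itself a reducible configuration, ruled out by the first step. Where direct reducibility fails, I would look for short alternating $2$-colored paths or cycles along which a Kempe-type swap converts an edge in $B$ into a normal edge; tracking the pair $(|B|, \Phi(\phi))$ for a carefully designed secondary potential $\Phi$ would be the workhorse for termination.

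The main obstacle, and the reason the conjecture has resisted proof since 1988, is precisely this Kempe-swap step: normality is defined on a ball of radius one in \emph{all five} colors, so a swap that repairs an edge $e\in B$ can easily create several new non-normal edges outside the swap, and no monotone potential is known. My fallback plan is twofold. First, exploit the \emph{structure} of $B$ produced by the proof of the main theorem, not merely its size: if the non-normal edges can be shown to lie in a nowhere-zero $\mathbb{Z}_2^2$-flow supported on a short cycle, then rotation along that cycle shifts the badness off the graph. Second, if the constructive route fails, abandon explicit recoloring in favor of a flow/homomorphism existence argument, building a map $G\to P_{10}$ by combining a nowhere-zero $\mathbb{Z}_2^2$-flow with additional constraints encoding the Petersen graph's incidence structure, and proving its existence by a degree or polynomial-method argument on the parity space of $G$. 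I expect the Kempe step to be the true bottleneck, and a successful proof most likely requires an invariant strictly finer than $\mu_3$ to serve as the missing potential function.
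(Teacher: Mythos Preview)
The statement you are attempting to prove is Jaeger's Petersen coloring conjecture, which is \emph{open}; the paper does not prove it and offers no proof to compare against. It is stated as Conjecture~1.1 and the paper's contribution is the partial result Theorem~\ref{thm_main}. So there is no ``paper's proof'' here, and your proposal should be read as a research plan rather than a proof.

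As a plan, it has a concrete error and a self-identified gap. The error is the line ``forces $B=\emptyset$, i.e.\ $\mu_3(G)=0$ for any minimum counterexample.'' The condition $\mu_3(G)=0$ is equivalent to $G$ being $3$-edge-colorable; but any minimum counterexample is a snark (indeed cyclically $4$-edge-connected, as you note), so $\mu_3(G)>0$ automatically. The bound $|B|\le\mu_3(G)$ from Theorem~\ref{thm_main} therefore cannot be squeezed to $|B|=0$ by driving $\mu_3$ to zero---the Petersen graph itself already blocks that route. What you actually need is $|B|=0$ \emph{despite} $\mu_3(G)>0$, which is a strictly stronger statement than anything the paper provides.

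The self-identified gap is the real obstruction: you correctly observe that no monotone potential is known for Kempe-type swaps in the normal $5$-edge-coloring setting, because normality depends on all five colors in a radius-one ball. Your fallback ideas (a $\mathbb{Z}_2^2$-flow supported on a short cycle, or a homomorphism/polynomial-method existence argument) are speculative and not developed; neither is known to work, and the first would need a structural theorem about the abnormal set $B$ that goes well beyond what the proof of Theorem~\ref{thm_main} yields. The reducibility lemmas you list (cyclic $4$-edge-connectivity, girth $\ge 5$, absence of $K^*_{3,3}$) are already in the literature and have not cracked the conjecture. In short, your proposal accurately names the bottleneck but does not supply the missing idea.
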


This subsection devotes to some alternative formulations of the Petersen coloring conjecture.

Let $G$ be a graph. A set of edges $C$ is a \textit{binary cycle} if $C$ induces a subgraph of $G$ where every vertex has even degree.  
DeVos, Ne\v{s}et\v{r}il and Raspaud \cite{Devos_2006} defined that, given graphs $G$ and $H$, a mapping $\phi \colon\ E(G)\rightarrow E(H)$ is \textit{cycle-continuous} if the pre-image of each binary cycle of $H$ is a binary cycle of $G$. When $G$ and $H$ are cubic and additionally $H$ is cyclically 4-edge-connected,  
$G$ has a cycle-continuous mapping to $H$ if and only if $G$ has an $H$-coloring. This leads to the first alternate formulation of the Petersen coloring conjecture.
\begin{theorem}[e.g. \cite{Hana_MThesis}]
	A cubic graph has a Petersen-coloring if and only if it has a cycle-continuous mapping to the Petersen graph.
\end{theorem}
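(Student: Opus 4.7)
The plan is to prove both directions separately, using the following structural properties of the Petersen graph $P$: it is cubic, bridgeless, triangle-free, and cyclically $4$-edge-connected.

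For the forward direction, I would take a Petersen-coloring $\phi$ and an arbitrary binary cycle $C$ of $P$, and verify that every vertex of $G$ has even degree in $\phi^{-1}(C)$. At a vertex $v$ of $G$ with incident edges $e_1,e_2,e_3$, their $\phi$-images are three mutually adjacent edges of $P$; since $P$ is cubic and triangle-free, these three edges must share a common endpoint $w$, so $\{\phi(e_1),\phi(e_2),\phi(e_3)\}$ is exactly the set of edges incident to $w$ in $P$. Because $C$ is a binary cycle of $P$, this set meets $C$ in an even number of edges, so $v$ has even degree in $\phi^{-1}(C)$, which is precisely what is needed.

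For the reverse direction, fix any vertex $v$ of $G$ with incident edges $e_1,e_2,e_3$ and aim to show that their images form three mutually adjacent edges of $P$. Cycle-continuity forces $\phi^{-1}(C)$ to be a binary cycle of $G$ for every binary cycle $C$ of $P$, so the formal $\mathbb{F}_2$-sum $\phi(e_1)+\phi(e_2)+\phi(e_3)$ has even intersection with every binary cycle of $P$ and therefore lies in the cocycle space of $P$. If two of the three images coincided, this sum would collapse to a single edge and form a $1$-edge cocycle, i.e., a bridge of $P$, which is impossible; hence the three images are pairwise distinct and together form a $3$-edge cocycle of $P$.

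The main obstacle, and the step where cyclic $4$-edge-connectivity of $P$ is used in an essential way, is to show that any $3$-edge cocycle in a cubic cyclically $4$-edge-connected graph must be the set of edges at some vertex. The plan for this step is a short counting argument: if a $3$-edge cut partitioned $V(P)$ into parts $V_1\sqcup V_2$ with $|V_i|\geq 3$ on both sides, then each side would contain $(3|V_i|-3)/2$ internal edges and hence a cycle, contradicting cyclic $4$-edge-connectivity. Thus one side must be a singleton $\{w\}$, so $\phi(e_1),\phi(e_2),\phi(e_3)$ are the three edges at $w$, completing the proof that $\phi$ is a Petersen-coloring.
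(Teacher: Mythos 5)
The paper does not actually prove this theorem: it is quoted from the literature (B\'{\i}lkov\'{a}'s thesis, going back to DeVos, Ne\v{s}et\v{r}il and Raspaud), with only the remark that the equivalence holds whenever the target is cubic and cyclically $4$-edge-connected. So there is no in-paper argument to compare against; judged on its own, your proof is essentially the standard one and is correct. The forward direction (star at $v$ maps bijectively onto a star at some $w$ because the Petersen graph is cubic and triangle-free, hence meets every binary cycle evenly) is fine, and the reverse direction correctly uses the orthogonality of the cycle space and the cut space over $\mathbb{F}_2$, bridgelessness to rule out a collapsed image, and cyclic $4$-edge-connectivity to force a $3$-edge cut to be a vertex star.

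Two small points you should patch, neither of which is a real obstruction. First, in the $3$-edge-cut counting you pass from ``not both sides have at least $3$ vertices'' to ``one side is a singleton,'' silently skipping $|V_i|=2$; this case is excluded by parity, since $e(V_i)=(3|V_i|-3)/2$ must be an integer, so $|V_i|$ is odd. Second, the paper's definition of an $H$-coloring quantifies over \emph{all} triples of mutually adjacent edges of $G$, which includes the edge sets of triangles of $G$, whereas you only verify the condition at vertex stars. This is harmless: once every star of $G$ maps bijectively onto a star of $P$, the three edges of any triangle of $G$ have pairwise distinct, pairwise adjacent images, and since $P$ is triangle-free such a triple must again lie in a common star. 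Adding these two sentences would make the argument complete.
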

However, the study on cycle-continuous mapping makes no progress on solving the Petersen coloring conjecture so far.

Consider Cremona-Richmond configuration $G_{cr}$, which has 15 points and 15 lines, as drawn in Figure \ref{CR}. A \textit{CR-coloring} of a graph $G$ is a mapping from $E(G)$ to the points of $G_{cr}$ such that any three mutually adjacent edges of $G$ are mapped to three vertices of $G_{cr}$ that lie in a line.

\begin{figure}[ht]
	\centering
	\includegraphics[width=6cm]{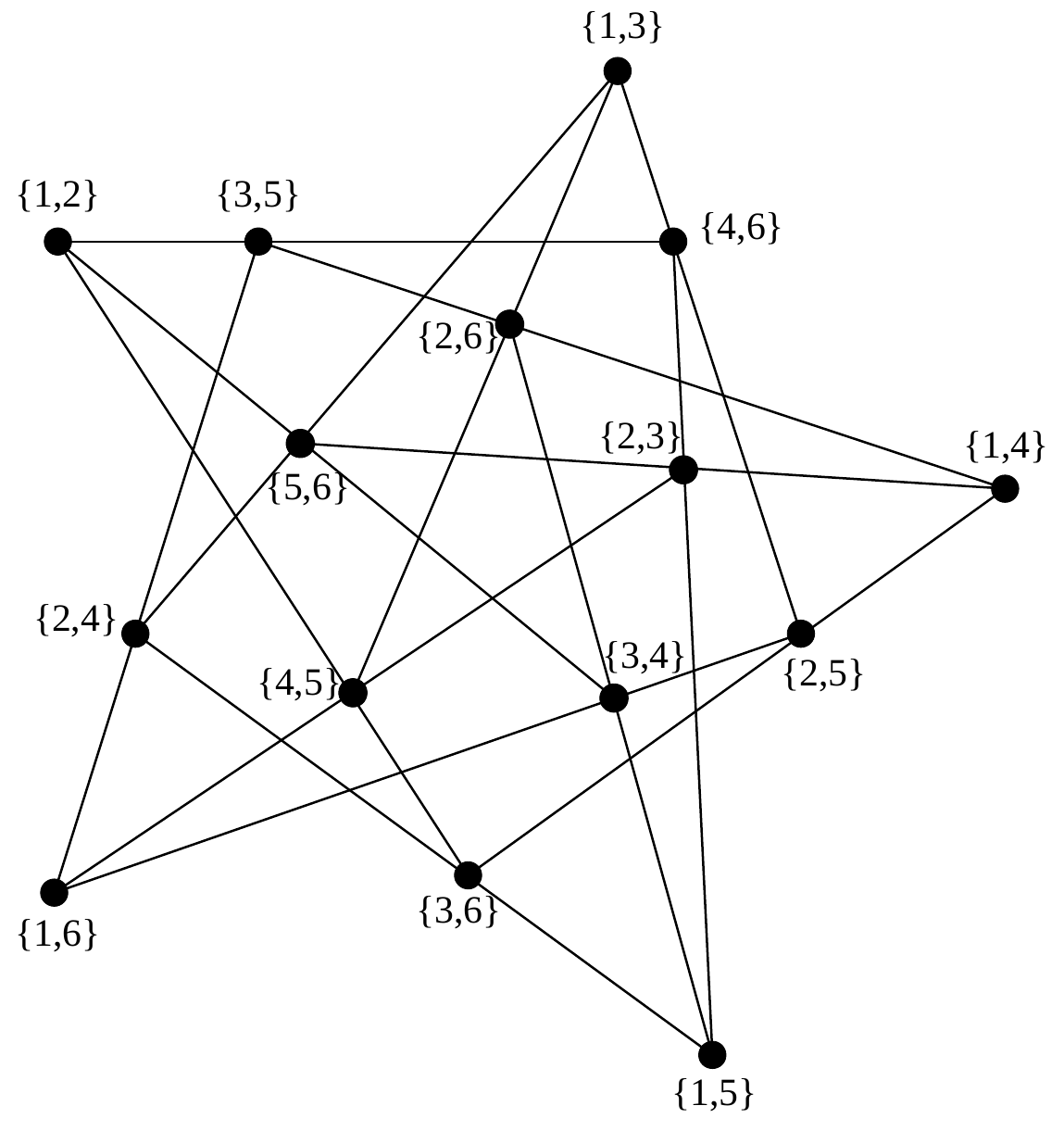}
	\caption{Cremona-Richmond configuration with $\{i,j\}$ labelling}\label{CR}
\end{figure}

\begin{theorem}[\cite{KMPRSS_2009}]\label{thm_BF_CR}
	A cubic graph has a Berge-Fulkerson cover if and only if it has a CR-coloring.
\end{theorem}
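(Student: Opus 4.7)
The plan is to exhibit the natural bijective correspondence between Berge-Fulkerson covers and CR-colorings, using the labelling of the Cremona-Richmond configuration shown in Figure~\ref{CR}. That is, identify the 15 points of $G_{cr}$ with the 2-element subsets of $\{1,2,3,4,5,6\}$ in such a way that three points are collinear in $G_{cr}$ if and only if the corresponding three pairs partition $\{1,2,3,4,5,6\}$. Under this labelling, both directions of the theorem reduce to short double-counting arguments.

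For the forward direction, suppose $G$ has a Berge-Fulkerson cover, i.e.\ a list $M_1,\ldots,M_6$ of perfect matchings (with repetitions allowed) such that every edge of $G$ lies in exactly two of them. Define $\phi\colon E(G)\to V(G_{cr})$ by $\phi(e)=\{i,j\}$, where $i<j$ are the unique indices with $e\in M_i\cap M_j$. The key step is to observe that at every vertex $v$, each $M_i$ contains exactly one edge incident to $v$: the three incident edges contribute $3\cdot 2=6$ matching-memberships in total, and no $M_i$ can contain two edges at $v$ (it is a matching), so each of the six matchings contributes exactly one. Consequently the three labels on the edges at $v$ form a partition of $\{1,\ldots,6\}$ and hence lie on a line of $G_{cr}$, so $\phi$ is a CR-coloring. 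For the converse, given a CR-coloring $\phi$, set $M_i=\{e\in E(G):i\in\phi(e)\}$ for each $i\in\{1,\ldots,6\}$. At any vertex $v$, the three labels of the incident edges partition $\{1,\ldots,6\}$, so exactly one of them contains $i$; thus $M_i$ is a perfect matching. Finally, each edge $e$ with $\phi(e)=\{i,j\}$ lies in precisely the two matchings $M_i$ and $M_j$, so $\{M_1,\ldots,M_6\}$ is a Berge-Fulkerson cover.

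The only real obstacle is the point-line description of $G_{cr}$; once one adopts the $\binom{[6]}{2}$ labelling in which lines correspond to perfect matchings of $[6]$, the coloring condition (three collinear points at every vertex) becomes literally the partition condition produced by a Berge-Fulkerson cover. Since the paper already references this labelling in Figure~\ref{CR}, the proof can invoke it without further justification, and the argument becomes essentially a translation between two equivalent combinatorial encodings of the same structure on the edges of $G$.
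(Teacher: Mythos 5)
Your proposal is correct and matches the paper's approach: the paper proves this theorem only by the one-line remark that it ``easily follows from a labelling of Cremona-Richmond configuration by $\{i,j\}$ with $1\leq i<j\leq 6$,'' which is exactly the $\binom{[6]}{2}$ point-labelling (lines $=$ partitions of $[6]$ into three pairs) that you spell out. Your double-counting argument at each vertex is the intended translation between the two structures, so you have simply written out the details the paper leaves implicit.
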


The truth of this theorem easily follows from a labelling of Cremona-Richmond configuration by $\{i,j\}$ with $1\leq i< j\leq 6$, as shown in Figure \ref{CR}. 
Here, we give another labelling of Cremona-Richmond configuration which yields that every CR-coloring of the graph $G$ is a nowhere-zero flow of $G$, that is, the flow values around a vertex sum up to zero. Such a labelling takes 15 non-zero elements of $\mathbb{Z}_2^4$, depicted in Figure \ref{CR2}.

\begin{figure}[ht]
	\centering
	\includegraphics[width=6cm]{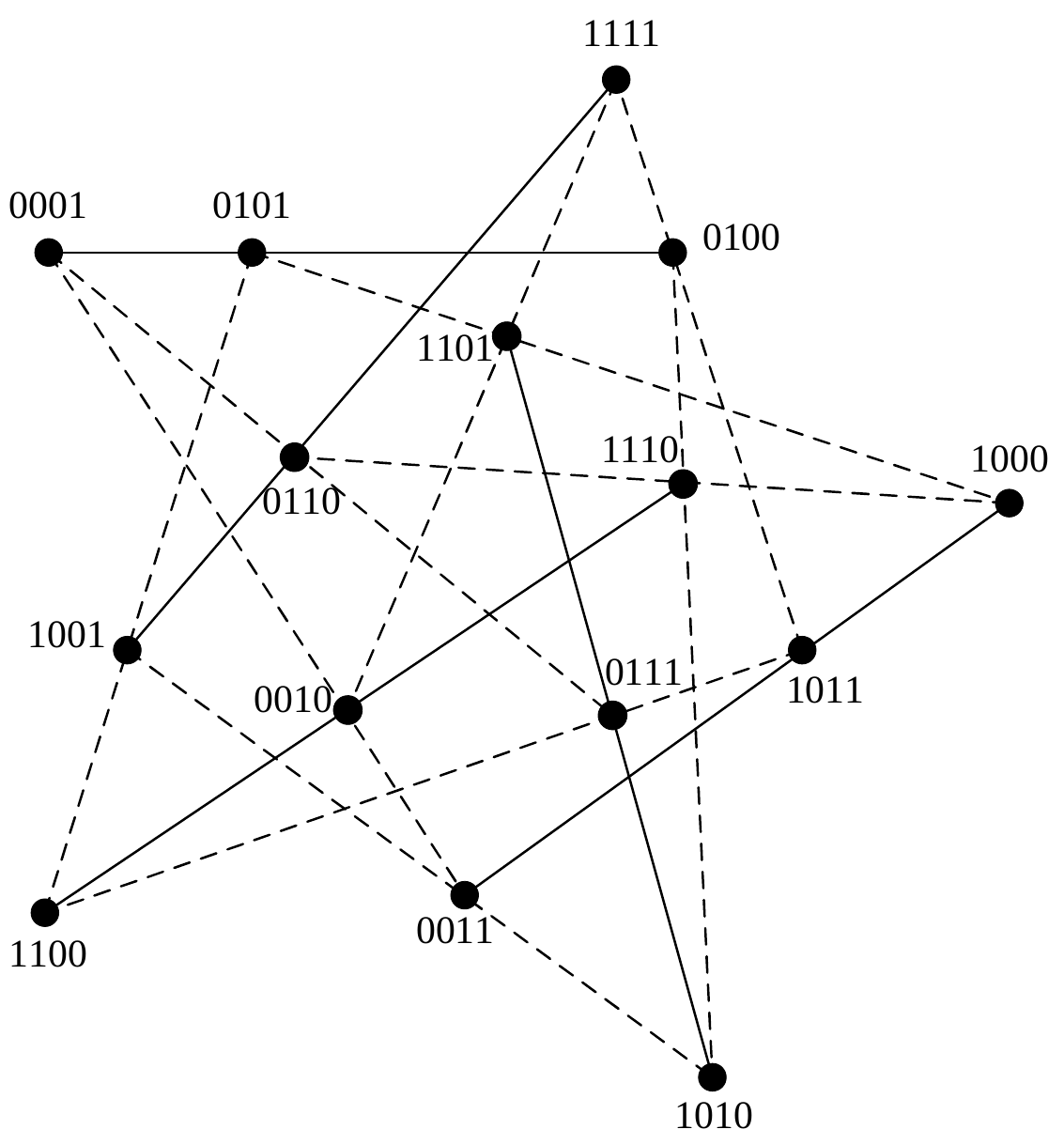}
	\caption{Cremona-Richmond configuration with $\mathbb{Z}_2^4$-labelling and with $L_{cr}$ in dotted line}\label{CR2}
\end{figure}

Let $L_{cr}$ be a set of 10 lines obtained from the lines of $G_{cr}$ by removing 5 pairwise disjoint lines. The dotted lines in Figure \ref{CR2} indicate an example of $L_{cr}$.
\begin{theorem}[\cite{KMPRSS_2009}]
	A cubic graph has a Petersen-coloring if and only if it has a CR-coloring using lines from $L_{cr}$.
\end{theorem}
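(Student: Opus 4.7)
The plan is to produce a bijection $\beta \colon E(P) \to V(G_{cr})$ between the $15$ edges of the Petersen graph $P$ and the $15$ points of the Cremona--Richmond configuration with the property that the ten vertex-stars of $P$ (triples of edges meeting at a common vertex) are mapped bijectively onto the ten lines of some choice of $L_{cr}$. Once such a $\beta$ is at hand, both directions of the equivalence follow by a short composition: a Petersen-coloring $\phi \colon E(G) \to E(P)$ yields the CR-coloring $\beta \circ \phi$, whose vertex-triples all lie in $L_{cr}$, and $\beta^{-1} \circ \psi$ reverses the process.

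To construct $\beta$, I would identify $V(P)$ with $\binom{[5]}{2}$, two $2$-subsets being adjacent iff disjoint. For an edge $\{A,B\} \in E(P)$, let $e \in [5]$ be the unique element not in $A \cup B$; then $\{A, B, \{e, 6\}\}$ is a perfect matching of $[6]$, giving a natural bijection from $E(P)$ onto the $15$ lines (synthemes) of $G_{cr}$. Composing with the point--line duality of $G_{cr} = GQ(2,2)$, which swaps duads and synthemes while reversing incidence, yields $\beta$.

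The verification rests on two points. First, the three edges of $P$ at a vertex $A \in \binom{[5]}{2}$ correspond to the three synthemes of $[6]$ containing the duad $A$, i.e., to three lines of $G_{cr}$ concurrent at $A$; under duality these become three collinear points on some line of $G_{cr}$. Ranging over the $10$ vertices of $P$ produces $10$ lines, the candidate $L_{cr}$. Second, the $5$ lines of $G_{cr}$ not in this set are exactly the duals of the duads $\{i,6\}$ for $i\in[5]$. Since every syntheme of $[6]$ contains exactly one pair through $6$, the set $\{\{i,6\}: i \in [5]\}$ is an \emph{ovoid} of $G_{cr}$, whose image under the duality is a \emph{spread}, i.e., $5$ pairwise disjoint lines covering all $15$ points. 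This matches the definition of $L_{cr}$, and the required bijection is established. The main obstacle is making the duality concrete; either one invokes the classical self-duality of $GQ(2,2)$ (equivalently, the outer automorphism of $S_6$), or one verifies the star-to-line property on representatives of the two $S_5$-orbits of duads and extends by the transitive action. The rest of the proof is a routine unwinding of definitions at each vertex of $G$.
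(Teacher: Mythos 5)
The paper does not prove this statement: it is quoted verbatim from \cite{KMPRSS_2009}, so there is no in-paper argument to compare against. Judged on its own, your construction is sound and is essentially the classical one (and, as far as one can tell, the route taken in the cited source): identify $E(P)$ with the synthemes of $[6]$ via $\{A,B\}\mapsto\{A,B,\{e,6\}\}$, check that the three edges at a vertex $A$ of $P$ are exactly the three synthemes containing the duad $A$, and push through a duality of $GQ(2,2)$ so that vertex-stars become lines and the five leftover lines (duals of the ovoid $\{\{i,6\}:i\in[5]\}$) form a spread. Since $P$ is triangle-free, every triple of mutually adjacent edges of $P$ is a vertex-star, so composing with $\beta$ or $\beta^{-1}$ converts Petersen-colorings into CR-colorings supported on your ten lines and back. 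The one point you should make explicit is that the paper's $L_{cr}$ is \emph{any} set obtained by deleting a spread, whereas your argument produces one particular such set; this is harmless because $\mathrm{Aut}(G_{cr})\cong S_6$ acts transitively on the six spreads of $GQ(2,2)$, so any prescribed $L_{cr}$ can be reached by composing $\beta$ with a configuration automorphism. With that sentence added, the proof is complete.
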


From the previous two theorems, it is easy to see again that the Petersen coloring conjecture implies Berge-Fulkerson conjecture.

Unfortunately, the study on CR-colorings makes no progress on solving the Petersen coloring conjecture either.
Here, we focus on another alternative formulation of the Petersen coloring conjecture, in terms of normal 5-edge-colorings.

\subsection{Normal 5-edge-coloring}
Let $G$ be a cubic graph and $\phi\colon\ E(G)\rightarrow \{1,2,\ldots,5\}$ be a proper 5-edge-coloring.
An edge $e$ is \textit{poor} (or \textit{rich}) if $e$ together with its four adjacent edges uses precisely 3 (or 5) colors in total. An edge is \textit{normal} if it is either rich or poor, and it is \textit{abnormal} otherwise.
A \textit{normal 5-edge-coloring} is a proper 5-edge-coloring such that all the edges are normal.
Jaeger \cite{Jaeger_1985} showed the equivalence between Petersen colorings and normal 5-edge-colorings of a cubic graph.

\begin{theorem}[\cite{Jaeger_1985}]
	A cubic graph has a Petersen-coloring if and only if it has a normal 5-edge-coloring.
\end{theorem}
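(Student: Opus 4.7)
The plan is to exhibit a proper 5-edge-coloring of the Petersen graph itself in which every edge is rich and then transfer it to $G$ via the Petersen-coloring. I use the Kneser realization $V(P)=\binom{[5]}{2}$, with two vertices adjacent iff they are disjoint as 2-subsets. Color each edge $\{a,b\}\{c,d\}$ of $P$ by the unique element $\gamma(\{a,b\}\{c,d\})\in[5]\setminus\{a,b,c,d\}$. A short calculation confirms that (i) at each vertex $\{a,b\}$ of $P$, the three incident edges receive the three distinct labels in $[5]\setminus\{a,b\}$, so $\gamma$ is proper, and (ii) for any edge $e$ of $P$ of color $x$, its four adjacent edges receive the four elements of $[5]\setminus\{x\}$ as colors, all distinct, so $e$ is rich. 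Given a Petersen-coloring $\phi\colon E(G)\to E(P)$, set $\psi=\gamma\circ\phi$. Because $\phi$ restricts to a bijection on each star of $G$ onto a star of $P$, the composition $\psi$ is a proper 5-edge-coloring. Because $\phi$ also induces a bijection between the edges adjacent to any $e\in E(G)$ and those adjacent to $\phi(e)$ in $P$, the richness of $\phi(e)$ under $\gamma$ carries over to $e$ under $\psi$, and hence $\psi$ is normal.

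\paragraph{Backward direction: rich edges.} Given a normal 5-edge-coloring $\psi$ of $G$, write $T(u)=\{\psi(e):e\ni u\}\in\binom{[5]}{3}$ and tentatively define $\phi^{\ast}(u)=[5]\setminus T(u)\in V(P)$. For a rich edge $e=uv$ we have $T(u)\cap T(v)=\{\psi(e)\}$, so $\phi^{\ast}(u)\cap\phi^{\ast}(v)=\emptyset$, whence $\phi^{\ast}(u)$ and $\phi^{\ast}(v)$ are adjacent in $P$; declare $\phi(e)$ to be the edge joining them. A direct calculation (writing $T(u)=\{x_1,x_2,x_3\}$ and using richness to show $T(v_i)=\{x_i\}\cup([5]\setminus T(u))$ for each neighbor $v_i$ of $u$) verifies that at any vertex $u$ all of whose incident edges are rich, the three neighbors $v_1,v_2,v_3$ are sent bijectively to the three neighbors of $\phi^{\ast}(u)$ in $P$, so the local bijection required for $\phi$ to be a Petersen-coloring is already satisfied at such a vertex, and moreover $\gamma\circ\phi=\psi$ on these edges, making the construction a genuine inverse of the forward one.

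\paragraph{The main obstacle.} The hard part is handling poor edges. For a poor edge $e=uv$ one has $T(u)=T(v)$, so the naive rule forces $\phi^{\ast}(u)=\phi^{\ast}(v)$, which is not an edge of $P$. My plan is to adjust $\phi^{\ast}$ component-wise. Let $G_p\subseteq G$ be the subgraph induced by the poor edges; because $T$ is constant on every connected component of $G_p$, the naive map collapses each such component to a single vertex of $P$, i.e.\ every component of $G_p$ lies in a single fiber of $\phi^{\ast}$. I would then exploit the action of $\mathrm{Aut}(P)\cong S_5$ on $V(P)$ to redistribute the vertices of each fiber among other vertices of $P$ so that (a) every poor edge is sent to a genuine edge of $P$, and (b) the local bijection property continues to hold at every vertex of $G$. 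Verifying that such a consistent redistribution always exists is the technical crux of the argument; an attractive alternative is to first perform Kempe-chain recolorings on $\psi$ itself which strictly decrease the number of poor edges while preserving normality, thereby reducing to the all-rich case already handled.
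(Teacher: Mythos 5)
The paper does not prove this statement; it cites it from Jaeger (1985), so I am judging your argument on its own.

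Your forward direction is essentially right: the Kneser colouring $\gamma$ of the Petersen graph $P$ is proper with every edge rich, and composing with a Petersen-colouring gives a proper $5$-edge-colouring of $G$. One imprecision: since $P$ is triangle-free, a Petersen-colouring $\phi$ maps the star at each vertex $u$ of $G$ bijectively onto the star at some vertex $\phi^{\ast}(u)$ of $P$, but nothing forbids $\phi^{\ast}(u)=\phi^{\ast}(v)$ for an edge $e=uv$. In that case the four edges adjacent to $e$ are mapped two-to-one onto the two other edges of that single star, there is no bijection with the neighbourhood of $\phi(e)$, and $e$ comes out \emph{poor} rather than rich. The edge is still normal, so the conclusion survives, but your stated reason (``richness carries over'') does not.

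The backward direction, which carries all the content of the theorem, has a genuine gap: you identify the problem posed by poor edges but resolve it by neither of your two proposed routes, and both point the wrong way. Redistributing the fibres of $\phi^{\ast}$ by automorphisms cannot be done vertex by vertex, because a vertex $u$ may be incident with both rich and poor edges, so moving $\phi^{\ast}(u)$ destroys the local bijection you have already secured at the rich edges of $u$. The Kempe-chain reduction to the all-rich case is hopeless in general: $K_4$ has a normal $5$-edge-colouring, yet in any proper $5$-edge-colouring of $K_4$ in which the edge $12$ is rich, the opposite edge $34$ must repeat the colour of $12$, and symmetrically each pair of opposite edges is monochromatic, which forces every edge to be poor --- so no all-rich colouring of $K_4$ exists and no recolouring can reach one. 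The actual fix is a small change of the assignment rule rather than a change of the colouring: for $e=uv$ define $\phi(e)$ to be the unique edge of $P$ incident with the vertex $[5]\setminus T(u)$ whose $\gamma$-colour is $\psi(e)$. Normality of $e$ is precisely what makes this independent of the choice of end: if $e$ is rich both candidates equal the edge joining $[5]\setminus T(u)$ to $[5]\setminus T(v)$, and if $e$ is poor they coincide because $T(u)=T(v)$, whereas for an abnormal edge the two candidates differ. Since the three edges of the star of $P$ at $[5]\setminus T(u)$ carry under $\gamma$ exactly the three colours of $T(u)$, the star at $u$ is sent bijectively onto that star of $P$, which is the required condition for a Petersen-colouring; your fibre/Kempe machinery is not needed.
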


A possible minimal counterexample to the Petersen coloring conjecture was characterized in literatures.
Jaeger \cite{Jaeger_1988} proved that it must be a cyclically 4-edge-connected snark.
By the study on normal 5-edge-colorings of cubic graphs, H\"{a}gglund and Steffen \cite{Steffen_Hagglund_2014} showed that  
the minimal counterexample does not contain $K^*_{3,3}$ as a subgraph (see Figure \ref{K33} for $K^*_{3,3}$).

\begin{figure}[ht]
	\centering
	\includegraphics[width=4cm]{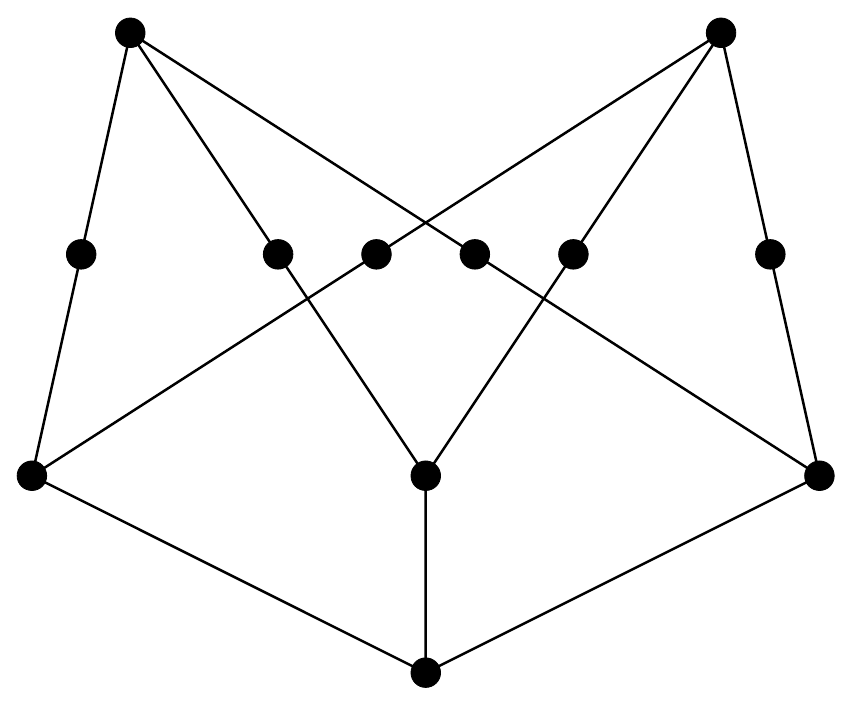}
	\caption{The graph $K^*_{3,3}$}\label{K33}
\end{figure}

A quite few classes of cubic graphs have been confirmed to have a normal 5-edge-coloring and thus a Petersen coloring as well. 
In \cite{Steffen_Hagglund_2014} it also showed that a cubic graph $G$ has a normal 5-edge-coloring if $G$ is a flower snark or a Goldberg snark or a generalized Blanu\v{s}a snark of type 1 or 2. With the aid of computer, Brinkmann et al. \cite{Brinkmann_Jan_2013} tested the Petersen coloring conjecture on cubic graphs of small order, and showed that every cubic graph of order no more than 36 has a normal 5-edge-coloring.
Recently, Ferrarini, Mazzuoccolo and Mkrtchyan \cite{Ferrarini_2019} confirm the existence of normal 5-edge-colorings for a family of Loupekhine snarks.

\subsection{Partially normal 5-edge-coloring}
Let $G$ be a cubic graph and $S_3$ be a list of three 1-factors $M_1, M_2, M_3$ of $G$. 
For $0\leq i\leq 3$, let $E_i$ be the set of edges that are contained in precisely $i$ elements of $S_3$.
Let $|E_0| = k$. 
The \textit{$k$-core} of $G$ with respect to $S_3$ (or to $M_1, M_2, M_3$)  is the subgraph $G_c$ of $G$
which is induced by $E_0 \cup E_2 \cup E_3$; that is, $G_c = G[E_0 \cup E_2 \cup E_3]$.
If the value of $k$ is irrelevant, then we
say that $G_c$ is a core of $G$.
Furthermore, $\mu_3(G) = \min \{ k \colon G \text{~has a $k$-core}\}.$
Clearly, every bridgeless cubic graph has a $\mu_3(G)$-core; and for any core, $E_0\cup E_2$ induces disjoint circuits.

Cores were introduced by Steffen \cite{Steffen_2014} recently and were used to prove partial results on some hard conjectures which are related to 1-factors of cubic graphs, such as Berge conjecture, Fan-Raspaud conjecture, and conjectures on cycle covers.
The parameter $\mu_3(G)$ can measure how far a cubic graph $G$ is from being 3-edge-colorable, and it was related to many other parameters, such as girth and oddness of $G$. We refer to \cite{Fiod_2018} for a survey on these kinds of measurements, and to \cite{Jin_Petersen-core,Jin_weak_core,Jin_fraction} for studies on cores and $\mu_3$.
In this paper, we will use them to prove a partial result on the Petersen coloring conjecture.

Considering that a normal 5-edge-coloring requires each edge to be normal, 
\v{S}\'{a}mal \cite{Samal_2011} presented a weaker problem approximate to the Petersen coloring conjecture, that is, to search for a proper 5-edge-coloring such that the normal edges are as many as possible. Here, such a coloring is called a partially normal 5-edge-coloring. Later on, B\'{\i}lkov\'{a} proved that a generalized prism has a proper 5-edge-coloring with two third of the edges normal (\cite{Hana_MThesis}, Theorem 2.3) and a cubic graph of large girth has a proper 5-edge-coloring with approximately half of the edges normal (\cite{Hana_MThesis}, Theorem 3.6).
In this paper, we show that for every bridgeless cubic graph, there exists a proper 5-edge-coloring such that almost all the edges are normal.
More precisely, we prove the following theorem.

\begin{theorem}\label{thm_main}
	Every bridgeless cubic graph $G$ has a proper 5-edge-coloring such that at least  $|E(G)|-\mu_3(G)$ many edges are normal.
\end{theorem}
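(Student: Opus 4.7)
The plan is to start with a triple $M_1,M_2,M_3$ of 1-factors of $G$ realizing $|E_0|=\mu_3(G)$, and to exhibit an explicit proper 5-edge-coloring of $G$ in which every abnormal edge can be charged injectively into $E_0$. A vertex-by-vertex incidence count shows that each $v\in V(G)$ falls into one of exactly three types: type A has all three incident edges in $E_1$, one in each $M_i$; type B has exactly one edge in each of $E_0$, $E_1\cap M_k$ and $E_2\cap (M_i\cap M_j)$ with $\{i,j,k\}=\{1,2,3\}$; and type C has exactly one edge in $E_3$ together with two edges in $E_0$. Consequently $E_2$, $E_3$ and their union are all matchings, and $G[E_0]$ has maximum degree $2$, decomposing into paths (ending at type-B vertices) and cycles (through type-C vertices only).

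I would then lay down a skeleton proper partial coloring: assign color $i$ to every edge of $E_1\cap M_i$ for $i=1,2,3$, and color $4$ to every edge of $E_2\cup E_3$. A direct case analysis at every edge of $E_1\cup E_2\cup E_3$ then yields simple normality criteria in terms of the yet-undetermined colors of adjacent $E_0$-edges: an $E_1$-edge between two type-A vertices is automatically poor; an $E_1$-edge between a type-A vertex and a type-B vertex $v$ is normal iff the $E_0$-edge at $v$ receives color $5$ (in which case it is rich); an $E_1$-edge with both endpoints of type B, as well as every $E_2$-edge, is normal iff the two $E_0$-edges at its endpoints share a common color; and an $E_3$-edge is normal iff the unordered pairs of $E_0$-colors at its two type-C endpoints are either equal or disjoint. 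What remains is to color the $E_0$-edges from $\{1,2,3,5\}$, avoiding the forbidden color(s) at each endpoint, so that as many of these criteria as possible are satisfied.

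The main obstacle is a potential clash at type-C vertices. The ``common color'' conditions group the type-B vertices into classes via the auxiliary graph whose edges are the $E_1$-edges with both endpoints of type B together with all $E_2$-edges; all $E_0$-edges at type-B vertices of a single class must then carry one common color, and any class containing a vertex adjacent via $E_1$ to a type-A vertex is \emph{committed} to color $5$. At a type-C vertex, however, the two incident $E_0$-edges must carry distinct colors and both avoid color $4$, so if both of these edges lie in $5$-committed classes, full normality of $E_1\cup E_2\cup E_3$ cannot be achieved. To overcome this, I plan to refine the choice of $M_1,M_2,M_3$---for instance by additionally minimizing $|E_3|$ among $\mu_3(G)$-witnessing triples and performing alternating-path swaps through $E_3$-edges, exploiting the minimality of $|E_0|$ to constrain the remaining clashes---and, where clashes still persist, to deviate the color on exactly one of the two offending $E_0$-edges away from $5$ and charge the resulting abnormality injectively to that $E_0$-edge. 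Because distinct clashes can be localized to disjoint pieces of the core $G[E_0\cup E_2\cup E_3]$, the total number of abnormal edges stays bounded by $|E_0|=\mu_3(G)$, giving the theorem.
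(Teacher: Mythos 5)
Your reduction is correct and is a genuinely different setup from the paper's: the classification of vertices into types A, B, C, the skeleton coloring ($E_1\cap M_i\mapsto i$, all of $E_2\cup E_3\mapsto 4$, $E_0$ to be colored from $\{1,2,3,5\}$), and the normality criteria you derive from it are all accurate, and they correctly reduce the theorem to a constraint-satisfaction problem on the colors of the $E_0$-edges. (The paper instead keeps colors $1,2,3$ only on $E_1$, colors the circuits of $G[E_0\cup E_2]$ with $4,5$ or extends into $\{1,2,3\}$ depending on their structure, and organizes the odd circuits joined by $E_3$-edges into ``strings'' and ``waves''.) The difficulty is that everything after ``The main obstacle'' is where the entire content of the theorem lives, and your plan for that part does not work as stated.

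Two concrete problems. First, the charging is quantitatively wrong. An injection from the set of abnormal edges into $E_0$ forces each abnormal non-$E_0$ edge to be paired with a distinct $E_0$-edge that is not already charging itself, i.e.\ effectively with a distinct \emph{normal} $E_0$-edge. But deviating a single $E_0$-edge $f$ at a type-B vertex $w$ away from color $5$ simultaneously violates the normality criterion of the $E_1$-edge at $w$ \emph{and} of the $E_2$-edge at $w$ (and possibly of the $E_3$-edge at the type-C end $v$ of $f$, and of further edges at the far end of that $E_3$-edge), so one deviation can create two to four abnormal non-$E_0$ edges while you offer only the single edge $f$ as a target; moreover $f$ itself need not be normal after the deviation (with $f=vw$, the five edges around $f$ carry colors $\{\phi(f),k_w,4,\phi(f')\}$ where $f'$ is the other $E_0$-edge at $v$, so $f$ is poor iff $\phi(f')=k_w$ and can never be rich). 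Second, the assertion that distinct clashes can be localized to disjoint pieces of the core is unjustified and false in general: a single $5$-committed class of type-B vertices can abut many type-C vertices, and the ``equal or disjoint'' condition on the two color-pairs at the ends of each $E_3$-edge couples far-apart paths and cycles of $G[E_0]$ into one global system; the proposed remedies (minimizing $|E_3|$ among $\mu_3$-witnessing triples, alternating-path swaps) come with no argument that they eliminate or even bound these interactions. This global propagation along $E_3$ is precisely what the paper's machinery of strings, waves (Lemma~\ref{lem_wave}) and the potential function $\theta$ (Lemma~\ref{lem_mu3_abnormal}) is built to control, and it occupies essentially the whole published proof; without a worked-out replacement, your argument is a clean reduction followed by an unproven claim.
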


Since $\mu_3(G)\leq \frac{1}{5}|E(G)|$ by a result of Kaiser, Kr\'{a}l and Norine in \cite{Kaiser2006}, a direct consequence of this theorem is as follows.
\begin{corollary}
	Every bridgeless cubic graph $G$ has a proper 5-edge-coloring such that at least  $\frac{4}{5}|E(G)|$ edges are normal.
\end{corollary}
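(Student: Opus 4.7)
The plan is to fix a triple $S_3 = (M_1, M_2, M_3)$ of 1-factors of $G$ with $|E_0| = \mu_3(G)$ and to build a proper 5-edge-coloring $\phi$ in which every edge of $E_1 \cup E_2 \cup E_3$ is normal; since $|E_0| = \mu_3(G)$, this produces at least $|E(G)| - \mu_3(G)$ normal edges. The structural setup is a vertex-wise case analysis based on how the slot-edges $M_1(v), M_2(v), M_3(v)$ coincide: every vertex $v$ is either \emph{non-core} (three distinct slots, all incident edges in $E_1$), \emph{Case~2} (two slots coincide on an $E_2$ edge, plus one $E_1$ edge in some $M_k$ and one $E_0$ edge), or \emph{Case~1} (all three slots coincide on an $E_3$ edge, plus two $E_0$ edges). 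Consequently $G[E_0 \cup E_2]$ is $2$-regular on the core vertices, hence a vertex-disjoint union of cycles, while $E_3$ is a matching on the Case~1 vertices and $E_2$ is a matching on the Case~2 vertices.

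I first set $\phi(e) = i$ for every $e \in E_1 \cap M_i$; this colors $E_1$, and it remains to extend $\phi$ to $E_0 \cup E_2 \cup E_3$. On each cycle $C$ of $G[E_0 \cup E_2]$, I extend $\phi$ by alternating the colors $4$ and $5$ along $C$, so that at each Case~2 vertex on $C$ the incident $E_2$ and $E_0$ edges receive the two distinct elements of $\{4,5\}$, and at each Case~1 vertex on $C$ the two incident $E_0$ edges receive the two distinct elements of $\{4,5\}$. Finally, every $E_3$ edge is assigned an arbitrary color from $\{1,2,3\}$; propriety is automatic because its four $E_0$ neighbors already lie in $\{4,5\}$. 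A routine check shows that every $E_2$ edge becomes poor (its closed neighborhood realizes exactly $\{k,4,5\}$, where $k$ is the common distinguished slot at its endpoints), every $E_3$ edge becomes poor (closed neighborhood $\{c,4,5\}$ for the chosen $c\in\{1,2,3\}$), and every $E_1$ edge is normal --- poor when its two endpoints are of the same type (both non-core or both Case~2) and rich when they are of different types, in which case all five colors appear in the closed neighborhood.

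The obstacle is that the alternation on a cycle $C$ of $G[E_0 \cup E_2]$ requires $C$ to have even length. The crux of the proof is therefore a parity lemma: under the minimality of $S_3$, every cycle of $G[E_0 \cup E_2]$ has even length. I expect to prove this by showing that any odd such cycle supports an alternating exchange among $M_1,M_2,M_3$ that strictly decreases $|E_0|$, contradicting the choice of $S_3$. Should the parity claim fail in full generality, a fallback is to recolor a single $E_0$ edge on each offending odd cycle with a color from $\{1,2,3\}$ to restore propriety; the resulting loss of normality can then be localized and charged to $E_0$ edges, since each odd cycle contributes at least two of them. Completing this parity argument (or the accounting in the fallback) is the main technical step, while the remainder of the proof reduces to the direct local verifications indicated above.
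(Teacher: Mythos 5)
Your reduction to the main inequality ``at most $\mu_3(G)$ abnormal edges'' is the right frame, and your handling of the easy part is sound: for an \emph{even} circuit of $G[E_0\cup E_2]$ the alternating $4,5$-coloring does make every $E_2$ edge poor, every $E_3$ edge poor, and every $E_1$ edge poor or rich, exactly as in the paper's treatment of the even circuits $H_1$. But the proof hinges on your ``parity lemma,'' and that lemma is false. A circuit $D$ of $G[E_0\cup E_2]$ satisfies $|E(D)|+\sigma(D)=2|E(D)\cap E_0|$, where $\sigma(D)$ counts the vertices of $D$ meeting an $E_3$ edge, so $D$ is odd exactly when $\sigma(D)$ is odd; nothing in the minimality of $|E_0|$ forbids this, and minimum cores of snarks routinely contain odd circuits (e.g.\ triangles and pentagons with a single $E_3$ attachment --- this is precisely why the paper defines strings of odd circuits, waves, and the class $\Omega(G_c)$ of short circuits with $\sigma=1$, and why its case analysis runs over circuits of length $3$, $5$ and $7$). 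A true parity lemma would collapse essentially the entire paper to your two-paragraph argument.

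The fallback does not repair this. If you break the alternation on an odd circuit $C$ by giving one $E_0$ edge a color $c\in\{1,2,3\}$, the two cycle-edges adjacent to the defect now see $\{4,5,c\}$ among their cycle-neighbors; when such an edge lies in $E_2$ its closed neighborhood realizes four colors (its two $E_1$ pendants both carry the same color $k$, and choosing $c=k$ is typically blocked by propriety at the endpoints of the recolored edge), so it becomes abnormal and costs against the budget, as may the pendant $E_1$ edges and the incident $E_3$ edge. For a triangle with $\sigma=1$ the circuit contributes only two $E_0$ edges, and those must moreover actually \emph{be} normal to earn credit, which depends on the colors of the $E_3$ edge and of the neighboring circuit it leads to --- this propagation is exactly what forces the paper's global wave construction rather than a local charge. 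Finally, note that even granting the main theorem, the corollary as stated needs the additional input $\mu_3(G)\le\frac{1}{5}|E(G)|$ (Kaiser, Kr\'al and Norine), which your proposal never invokes.
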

 
The proof of this theorem will be done by constructing such a proper 5-edge-coloring with the help of the structural properties on cores. First of all, we need some definitions and lemmas.

\section{Definitions and lemmas}
Let $G$ be a cubic graph. If $C$ is a circuit of $G$, then $\langle C\rangle$ denotes the set of edges not on $C$ but having at least one end on $C$. Analogously, if $P$ is a path of $G$ with ends $x$ and $y$, then $\langle P\rangle$ denotes the set of edges not on $P$ but having at least one end on $P-x-y$. If $H$ is a set of vertex-disjoint circuits or paths of $G$, then define that $\langle H\rangle = \bigcup_{h\in H} \langle h\rangle.$

Let $G$ be a cubic graph and $X\subseteq E(G)$.
Let $\psi\colon\ X\rightarrow \{1,\ldots,5\}$ be a proper edge-coloring of $G[X]$. 
Let $H$ be a set of vertex-disjoint circuits or paths of $G$.
The subgraph $H$ is \textit{$\psi$-extendable} if the following three items hold: (i) $E(H)\cap X=\emptyset$; (ii) $\psi(e)\in \{1,2,3\}$ for $e\in \langle H \rangle \cap X$; (iii) we can assign $E(H)\cup \langle H\rangle \setminus X$ with colors from $\{1,2,3\}$ so that the resulting coloring remains proper.
Applying the third item is called \textit{$\psi$-extending} $H$.
Let $v$ be a vertex of $G$, $E(v)$ be the edges incident with $v$ and $C(v)$ be the colors on $E(v)$, that is, $C(v)=\{\psi(e)\colon\ e\in X\cap E(v)\}$.
An edge $h$ of $E(v)$ is \textit{$\psi$-good} on $v$ if either $C(v)=\{1,2,3\}$ or $e\notin X$ and $C(v)=\{4,5\}$.
Let $H$ be a subgraph of $G$ of minimum degree 2. Define $\mathcal{E}_\psi(H)$ to be edges of $G-E(H)$ that has an end on $H$ and is not $\psi$-good on this end. If $\psi$ is clear from the context, we write $\mathcal{E}(H)$ for short.

Let $G_c$ be a core of a cubic graph $G$ with respect to three 1-factors $M_1,M_2,M_3$.
The \emph{major-coloring} of $G$ with respect to $M_1,M_2,M_3$ (or to $G_c$) is a mapping $\phi\colon\ E(G)\setminus E(G_c)\rightarrow \{1,2,3\}$ defined as $\phi(e)=i$ for each $e\in (E(G)\setminus E(G_c))\cap M_i$.
Let $k$ be an integer and $k\geq 2$.
A \emph{string} $P$ of $G_c$ is a subgraph of $G_c$ consisting of distinct odd circuits $C_0,C_1,\ldots,C_k$ of $G[E_0\cup E_2]$ and edges $e_1,\ldots,e_k$ of $E_3$ such that each $e_i$ connects a vertex $u_i$ of $C_{i-1}$ to a vertex $v_i$ of $C_i$. 
Such a string is denoted by $C_0e_1C_1\ldots e_kC_k$ or $C_0(u_1v_1)C_1\ldots (u_kv_k)C_k$.
The two circuits $C_0$ and $C_k$ are called \emph{end-circuits} of $P$, and the remaining circuits are called \emph{middle-circuits} of $P$.

Let $G_c$ be a core of a cubic graph $G$ and $\phi_m$ be the major-coloring of $G$ with respect to $G_c$.
Let $P^1,\ldots,P^s$ be pairwise disjoint strings of $G_c$ by notation $P^i=C_0^i(u_1^iv_1^i)C_1^i\ldots (u_{t_i}^iv_{t_i}^i)C_{t_i}^i$ for $i\in \{1,\ldots,s\}.$
Denote by $Q$ the union of all the odd circuits of $G[E_0\cup E_2]$ not contained in any of these strings.
The union of $P^1,\ldots,P^s$ is a \emph{wave} if for $i\in \{1,\ldots,s\}$ there exist a path $p^i_j$ of $C^i_j$ between $v^i_j$ and $u^i_{j+1}$ for $j\in \{1,\ldots,t_i-1\}$, a path $p^i_0$ of $C^i_0$ with $u^i_1$ as an end and a path $p^i_{t_i}$ of $C^i_{t_i}$ with $v^i_{t_i}$ as an end, satisfying the following three items: 
\begin{enumerate}[(1)]
	\item $P=\{p^i_j\colon 1\leq i\leq s \text{ and } 0\leq j\leq t_i\}$ is $\phi_m$-extendable.
	\item For any $i\in \{1,\ldots,s\}$ and $j\in\{0,t_i\}$, if $|E(p^i_j)|\leq |E(C^i_j)|-2$, let $\overline{p}^i_j$ consist of $p^i_j$ and the end-edge of $C^i_j-E(p^i_j)$ that is not incident with $u^i_1$ or $v^i_{t_i}$, and let $P'$ be obtained from $P$ by constituting $\overline{p}^i_j$ for $p^i_j$, then $P'$ is not $\phi_m$-extendable.
	\item For any component $q_1$ of $Q$ and another component $q_2$ of $P\cup Q$, we have $\langle q_1\rangle\cap \langle q_2\rangle \cap E_3=\emptyset.$
\end{enumerate}
Such a wave is denoted by $P^1+\cdots+P^s$.

\begin{lemma}\label{lem_wave}
	Let $G_c$ be a core of a bridgeless cubic graph $G$. If $G_c$ has a string, then it has a wave.	
\end{lemma}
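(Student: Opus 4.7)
The plan is an extremal argument. Let $\phi_m$ denote the major-coloring with respect to $G_c$, and let $\mathcal{F}$ be the family of pairs $(\mathcal{P},\mathbf{p})$ where $\mathcal{P}=\{P^1,\ldots,P^s\}$ is a set of pairwise vertex-disjoint strings of $G_c$ and $\mathbf{p}=\{p^i_j\}$ is a choice of paths on the circuits of each $P^i$ (of the shape required in the wave definition) that together form a $\phi_m$-extendable subgraph. To see $\mathcal{F}\neq\emptyset$, I would use the given string $P=C_0e_1C_1\cdots e_kC_k$: take the trivial end-paths $p_0=\{u_1\}$ and $p_k=\{v_k\}$ so that $\langle p_0\rangle=\langle p_k\rangle=\emptyset$, and on each middle circuit $C_j$ pick one of the two paths from $v_j$ to $u_{j+1}$. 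The pre-colored $E_1$-edges meeting internal vertices of $p_j$, together with the three-color budget, constrain the coloring of $E(p_j)$ and the incident $E_3$-edges; a short case analysis on these colors and the freedom to choose the direction around $C_j$ produces a consistent $\phi_m$-extension.

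Next, I pick a configuration in $\mathcal{F}$ maximizing, lexicographically, (i) the total number of odd circuits of $G[E_0\cup E_2]$ contained in $\mathcal{P}$, and then (ii) the total number of edges in the end-paths $\bigcup_{i}(E(p^i_0)\cup E(p^i_{t_i}))$. Item (1) of the wave definition holds by membership in $\mathcal{F}$. Item (2) is a direct consequence of (ii): if some end-path $p^i_j$ (with $j\in\{0,t_i\}$) could be replaced by the one-edge extension $\overline{p}^i_j$ while preserving $\phi_m$-extendability, the resulting configuration would lie in $\mathcal{F}$ with a strictly larger end-path total, contradicting the maximality.

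The main work is to verify item (3). Suppose for contradiction that some component $q_1$ of $Q$ is joined by an $E_3$-edge $f$ to another component $q_2$ of $P\cup Q$; my plan is to construct a configuration in $\mathcal{F}$ covering strictly more odd circuits, contradicting criterion (i). The construction branches on where $q_2$ lies: (a) if $q_2$ is an end-circuit of some $P^i$, append $q_1$ via $f$, making $q_1$ the new end-circuit (with a trivial end-path) and demoting the old end-path on $q_2$ to a middle-path; (b) if $q_2$ is a middle-circuit of $P^i$, split $P^i$ at $q_2$ and attach $q_1$ to one of the resulting pieces to form new strings; (c) if $q_2$ lies in $Q$, combine $q_1$, $q_2$ and a third odd circuit reachable by a further $E_3$-edge into a fresh string. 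In each case the modified configuration must remain $\phi_m$-extendable, which is the \emph{main obstacle}: one must show how the old extension around $q_2$ can be adjusted (possibly by recoloring $f$ and adjacent $E_3$-edges, exploiting the three-color flexibility) and, in case (b) and part of (c), verify that the resulting strings still satisfy the $k\ge 2$ requirement. The end-path maximality guaranteed by (ii), together with the bridgelessness of $G$, should provide the slack needed to both recolor locally and avoid degenerate string configurations during the restructuring.
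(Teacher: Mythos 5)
Your plan replaces the paper's greedy algorithm by a lexicographic extremal choice, which is a legitimate alternative route in principle, but as written it has two concrete gaps. First, the non-emptiness of $\mathcal{F}$: you keep \emph{all} circuits of the given string and claim that ``the freedom to choose the direction around $C_j$'' always yields a $\phi_m$-extendable system of connecting paths. This is false. On a middle circuit the path must join the two prescribed attachment vertices $v_j$ and $u_{j+1}$, and the $E_1$-pendants at its internal vertices carry colors forced by $\phi_m$; for instance an arc $w_0w_1w_2w_3w_4$ whose internal pendants are colored $1,2,1$ forces color $3$ on both $w_1w_2$ and $w_2w_3$, so that arc is not extendable, and the complementary arc can fail for the same reason. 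This is precisely why the paper never fixes the string in advance: its algorithm only uses the hypothesis to seed $W$ with two circuits and one $E_3$-edge, and then admits a new circuit into the string only when a (shortest) extendable path reaching it exists. You could repair non-emptiness by starting from $C_0e_1C_1$ with trivial end-paths, but not by the argument you give.

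Second, and more seriously, your verification of item (3) fails in case (b). If $q_1\in Q$ attaches by an $E_3$-edge to an internal vertex of a \emph{middle} path $p^i_j$, the only way to incorporate $q_1$ is to split $P^i$ at $C^i_j$, and the discarded side may then be a single circuit, which is not a string; the net number of circuits covered does not increase. Concretely, for $t_i=2$ and $j=1$ the four odd circuits $C^i_0,C^i_2,q_1$ around the center $C^i_1$ form a star, and no string can contain more than three of them, so criterion (i) gives no contradiction, and criterion (ii) can even decrease. What your extremal scheme is missing is the ingredient the paper gets from choosing each middle path as a \emph{shortest} extendable path reaching a circuit outside the current wave: that minimality directly forbids any internal vertex of a middle path from being $E_3$-adjacent to a circuit that ends up in $Q$. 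Encoding this as a further lexicographic criterion is delicate, since end-paths must be maximized while middle paths must be minimized and both interact with criterion (i). By contrast, the step you flag as the ``main obstacle'' --- preserving extendability under restructuring --- is not one: restricting the existing joint extension to sub-paths and taking trivial paths on the newly attached circuit already keeps the system extendable.
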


\begin{proof}
	We construct such a wave $W$ by an algorithm. 
	
	Let $\phi_m$ be the major-coloring of $G$ with respect to $G_c$, and let $H$ be the union of odd circuits of $G[E_0\cup E_2]$.
	Since $G_c$ has a string, say $s$, we can take two circuits $C_u$ and $C_v$ and an edge $e$ of $s$ such that
	$e$ connects a vertex $u$ of $C_u$ to a vertex $v$ of $C_v$. 
	Initialize $W$ to be a graph consisting of $C_u,C_v$ and $e$. Initialize $P$ and $F$ to be empty sets, which will collect paths.
	
	(*) If there exists no $\phi_m$-extendable path $p$ on $C_u$ whose one end is $u$ and the other end (say $w$) is connected to a vertex (say $x$) of some circuit (say $C_x$) of $H-W$ such that $wx\in E_3$, then add into $P$ and $F$ the longest $\phi_m$-extendable path on $C_u$ which takes $u$ as an end; otherwise (i.e., if such $p$ exists), we do the following: take such $p$ of minimum length, let $W$ include $wx$ and $C_x$, add $p$ into $P$ and $F$, and then repeat this argument with $x$ and $C_x$ instead of $u$ and $C_u$ respectively until no such $p$ exists anymore. 
	
	We can see from (*) that for any two components $p_1$ and $p_2$ of $F$ (w.l.o.g., assume that $p_1$ was put into $F$ earlier than $p_2$), we have $\langle p_1\rangle\cap \langle p_2\rangle \cap E_3=\emptyset$ by the length minimality of $p_1$. Since no two edges of $E_3$ are adjacent, we can further deduce that $\langle p_1\rangle\cap E_3$ and $\langle p_2\rangle\cap E_3$ are disjoint. Therefore, we can $\phi_m$-extend $F$ and we do it. Denote by $\phi'_m$ the resulting coloring.
 	Repeat the argument (*) with $v, C_v, \phi'_m$ instead of $u, C_u, \phi_m$, respectively.
 	We can see that the resulting $F$ is still $\phi_m$-extendable.
	Now the first string of $W$ is completed.
	
	If $G_c$ has a string disjoint with $W$, then reset $F$ to be an empty set and apply the same argument on this string as on $s$, which yields the second string of $W$.
	Repeat this until $G_c$ has no strings disjoint with the resulting $W$.
	
	Now the construction of $W$ is completed. We shall prove that $W$ is a wave. Following the notation in the wave definition, let $Q=H-W$. Firstly, 
	since $G_c$ has no strings disjoint with $W$ right now,
	for any two distinct components $q_1$ and $q_2$ of $Q$, we have $\langle q_1\rangle\cap \langle q_2\rangle \cap E_3=\emptyset.$
	For any component $p$ of $P$, by the length maximality of $p$ when it belongs to an end-circuit of $W$ and by the length minimality of $p$ when it doesn't, we can deduce that $\langle p\rangle\cap \langle q\rangle \cap E_3=\emptyset$ for any component $q$ of $Q$.
	Therefore, the item (3) holds for $W$.
	Secondly, for any two components of $P$ locating in different strings, take $p$ as the one put into $P$ earlier than the other (say $q$) and again, by the length maximality or minimality of $p$ we can deduce that $\langle p\rangle\cap \langle q\rangle \cap E_3=\emptyset$.
	Moreover, the algorithm shows that each component of $P$ is $\phi_m$-extendable.
	Therefore, $P$ is $\phi_m$-extendable as well, i.e., the item (1) holds for $W$.
	It remains to show that the item (2) holds for $W$. If not, then following the notation of item (2),
	$W$ has an end-circuit $C^i_j$ as described in item (2) but $P'$ is $\phi_m$-extendable, contradicting the length maximality of $p^i_j$.
\end{proof}

Let $G_c$ be a core of a cubic graph $G$. Let $D$ be a circuit of $G[E_0\cup E_2]$. Define $\sigma(D)$ to be the number of vertices of $D$ incident with an edge from $E_3$. Note that $\sigma(D)\geq |\langle D \rangle \cap E_3|$. Define $\Omega(G_c)=\{C\colon\ C$ is a circuit of $G[E_0\cup E_2]$ such that $\sigma(C)=1$ and $|E(C)|\leq 5\}$. Let $C_1$ and $C_2$ be two distinct circuits of $\Omega(G_c)$.
$C_1$ and $C_2$ are \textit{$G_c$-connected} if there is an edge from $E_1$ connecting a vertex of $C_1$ to a vertex of $C_2$.
Let $e_i$ be the unique edge from $\langle C_i \rangle \cap E_3$ for $i\in \{1,2\}$.
Let $X\subseteq E(G-E(C_1)-E(C_2))$ and $\psi\colon\ X\rightarrow \{1,\ldots,5\}$ be a proper edge-coloring of $G[X]$. $C_1$ and $C_2$ are \textit{$\psi$-connected} if $e_1$ and $e_2$ are adjacent to a common edge $e\in X$ such that $\psi(e)\in \{4,5\}$.

Let $X$ be a set of edges of a cubic graph $G$ and let $\psi\colon\ X\rightarrow \{1,\ldots,5\}$ be a proper edge-coloring of $G[X]$.
An edge $e$ of $G$ is \textit{$\psi$-inner} if $e$ together with its adjacent edges belong to $X$; otherwise, $e$ is \textit{$\psi$-outer}.
Let $G_c$ be a core of $G$. Define $\theta_{G_c,\psi}$ as a function on $E(G)$ given by
\begin{equation*}
	\text{for each $\psi$-inner edge $e$, $\theta_{G_c,\psi}(e)=$}
	\begin{cases}
		1 & \text{if $e\in E_0$ and $e$ is normal,} \\
		-1 & \text{if $e\notin E_0$ and $e$ is abnormal,} \\
		0 & \text{otherwise;}	
	\end{cases}
\end{equation*}
\begin{equation*}
	\text{and for each $\psi$-outer edge $e$, $\theta_{G_c,\psi}(e)=$}
	\begin{cases}
		0 & \text{if $e\in E_0$}, \\
		-1 & \text{if $e\notin E_0$}.
	\end{cases}
\end{equation*}
If $G_c$ and $\psi$ are clear from the context, we write $\theta$ for short.
Moreover, for $X\subseteq E(G)$, define $\theta(X)=\sum_{x\in X}\theta(x)$.
We write $\theta(H)$ short for $\theta(E(H))$ for a subgraph $H$ of $G$.

A direct consequence of the definition of $\theta$ is the following lemma.
\begin{lemma}\label{lem_mu3_abnormal}
	Let $G_c$ is a $k$-core of a cubic graph $G$.
	If $\psi\colon\ E(G)\rightarrow \{1,\ldots,5\}$ is a proper edge-coloring of $G$,
	then $G$ has $k-\theta(G)$ abnormal edges.
\end{lemma}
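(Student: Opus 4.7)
The plan is to unpack the definitions directly and verify the identity by a simple counting argument; no structural lemma about cores or colorings is really needed. The first observation is that since $\psi$ is a proper edge-coloring of the whole graph $G$, we have $X=E(G)$, so for every edge $e$ both $e$ and its adjacent edges lie in $X$. Consequently every edge of $G$ is $\psi$-inner, and only the first of the two defining cases of $\theta_{G_c,\psi}$ is ever used.

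Next I would partition $E(G)$ into four classes according to the two binary attributes ``in $E_0$ or not'' and ``normal or abnormal'': let $a$ be the number of abnormal edges inside $E_0$, $b$ the number of normal edges inside $E_0$, $c$ the number of abnormal edges outside $E_0$, and $d$ the number of normal edges outside $E_0$. By definition of a $k$-core, $|E_0|=k$, so $a+b=k$. Reading off the inner-edge case of $\theta$, each of the $b$ edges contributes $1$, each of the $c$ edges contributes $-1$, and the remaining $a+d$ edges contribute $0$; hence
\[
\theta(G)=\sum_{e\in E(G)}\theta_{G_c,\psi}(e)=b-c.
\]

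Combining these, the total number of abnormal edges of $G$ is $a+c$, and
\[
k-\theta(G)=(a+b)-(b-c)=a+c,
\]
which is precisely what the lemma asserts. The only step worth doing with any care is the first one, namely confirming that the $\psi$-outer case of the definition is vacuous under the hypothesis that $\psi$ is defined on all of $E(G)$; after that the conclusion is an arithmetic identity and there is no real obstacle.
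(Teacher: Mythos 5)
Your proof is correct and is exactly the ``direct consequence of the definition'' that the paper alludes to; the paper in fact omits any written proof of this lemma, and your counting argument (every edge is $\psi$-inner since $X=E(G)$, so $\theta(G)=b-c$ and $k-\theta(G)=a+c$ is the number of abnormal edges) is the intended verification.
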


Now we are ready to prove the main theorem of this paper.

\section{Proof of Theorem \ref{thm_main}}
Trivially, the theorem holds true for 3-edge-colorable cubic graphs.
We may assume that $G$ is not 3-edge-colorable.
Let $G_c$ be a $\mu_3(G)$-core of $G$ and let $\phi_m$ be the major-coloring of $G$ with respect to $G_c$.
Let $H=G[E_0\cup E_2]$ and denote by $H_1$ the graph consisting of all the even circuits of $H$. 
If $G_c$ has a string, then it has a wave $W$ by Lemma \ref{lem_wave}, and denote by $H_2$ the graph consisting of all the odd circuits of $H$ that are contained in $W$; otherwise, to be convenient, we say that $W$ and $H_2$ are empty graphs. 
Let $H_3=H-H_1-H_2$. 
We will extend $\phi_m$ to a proper 5-edge-coloring $\phi_m'$ of $G$ by coloring $H_1,H_2,H_3$ in order and simultaneous the edges of $E_3$.
By Lemma \ref{lem_mu3_abnormal}, to show that the final coloring $\phi_m'$ yields at most $\mu_3(G)$ edges abnormal, it suffices to prove $\theta_{G_c,\phi_m'}(G)\geq 0$.
In what follows, since $G_c$ is fixed and we always consider the current coloring extended from $\phi_m$,  
we write $\theta$ and $\mathcal{E}$ briefly.   
Let $\mathcal{K}$ be a set initialized to be empty.
We will use $\mathcal{K}$ to collect subgraphs of $G$ which receive colors during the extension of $\phi_m$.

For each circuit $C$ of $H_1$, assign $E(C)$ with colors 4 and 5 alternately along $C$.
For each $e\in E(C)$, by the definition of the function $\theta$, if $e\in E_0$ then $\theta(e)\geq 0$. If $e\in E_2$, then $e$ is adjacent to two edges of the same color from $\{1,2,3\}$, so $e$ is poor yielding $\theta(e)=0$. Therefore, $\theta(C)\geq 0=|\mathcal{E}(C)|.$ Add $C$ into the set $\mathcal{K}$.

To describe the structure of the wave $W$, we use same notations as in the definition of a wave. 
By Property $(1)$ in the wave definition, we can $\phi_m$-extend $P$ and we do it.
Notice that the remaining part of $W$ are disjoint paths. Color them with 4 and 5 alternately along each path.
Add $W$ into $\mathcal{K}$.

\begin{claim}
	For each string $P^i$ of $W$, we have $\theta(P^i) \geq 2=|\mathcal{E}(P^i)|$.
\end{claim}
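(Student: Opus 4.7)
The plan is to analyze, under the partial coloring produced so far on $E(P^i)$, both the set $\mathcal{E}(P^i)$ and the sum $\theta(P^i)$, exploiting the wave structure and the vertex-structure of the core. First I would describe the induced coloring explicitly: after $\phi_m$-extending $P$ (Property~(1) of the wave), the paths $p_j^i$ together with the $E_3$-edges of $\langle P \rangle$ carry colors from $\{1,2,3\}$; meanwhile each complementary arc $C_j^i - E(p_j^i)$ together with the connecting edges $e_j^i \in E_3$ concatenates into one long path from $w_0^i$ (the end of $p_0^i$ distinct from $u_1^i$) through the connection vertices to $w_{t_i}^i$ (the end of $p_{t_i}^i$ distinct from $v_{t_i}^i$), colored with $\{4,5\}$ alternately.

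To compute $\mathcal{E}(P^i)$, I observe that at every connection vertex $u_j^i$ or $v_j^i$ all three incident edges lie in $E(P^i)$, so these vertices contribute no outgoing edge. For any other vertex $v$ of $P^i$ with outgoing edge $h$, a case analysis shows $h$ is $\phi_m'$-good unless $v \in \{w_0^i, w_{t_i}^i\}$: if $v$ is interior to some $p_j^i$ then $v \in \langle P \rangle$ and $h$ (whether in $E_1$ or $E_3$) is colored by the extension so that $C(v)=\{1,2,3\}$; if $v$ is interior to the complement and is an $E_3$-vertex (both $H$-neighbours in $E_0$, so the third edge lies in $E_3$), then $h$ is neither in the wave nor in $\langle P \rangle$, hence uncolored, and $C(v)=\{4,5\}$. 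At $w_0^i$ and $w_{t_i}^i$ the incident $p$-edge colored in $\{1,2,3\}$ together with the complement-edge colored in $\{4,5\}$ prevents $C(v)$ from being either $\{1,2,3\}$ or $\{4,5\}$, so $h$ fails to be good. This yields $|\mathcal{E}(P^i)|=2$.

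Next I sum $\theta$ over $E(P^i)$. Each interior edge of $p_j^i$ is poor (all five incident edges lie in $\{1,2,3\}$-colored edges), contributing $\theta \geq 0$; each interior edge of a complementary arc is poor or outer with $\theta \geq 0$ (using the same $E_3$-vertex property above); each connecting edge $e_j^i \in E_3$ can be made poor by choosing the $\phi_m$-extension so that the two $p$-edges meeting at $u_j^i$ and $v_j^i$ carry the same color from $\{1,2,3\}$, giving $\theta(e_j^i)=0$. Finally, the two end-edges of $p_0^i$ at $u_1^i$ and of $p_{t_i}^i$ at $v_{t_i}^i$ are rich (their five nearby edges realise all of $\{1,2,3,4,5\}$), and both lie in $E_0$ because connection vertices are $E_3$-vertices; each contributes $+1$ for a total of at least $2$, giving $\theta(P^i) \geq 2$.

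The main obstacle is the hidden assumption, used in both counts above, that every interior vertex of the complementary arc is an $E_3$-vertex, equivalently that no $E_2$-edge of $C_j^i$ lies on the complement. If an $E_2$-edge were to lie on the complement, its two $E_1$-endpoints would yield additional bad outgoing edges ($C(v)$ of the form $\{3,4,5\}$) and possibly $\theta=-1$ contributions, breaking $|\mathcal{E}(P^i)|=2$. Overcoming this will rely on the length-extremality of the $p_j^i$ guaranteed by the algorithm of Lemma~\ref{lem_wave} ($p_0^i,p_{t_i}^i$ of maximum length in the end-circuits, middle $p_j^i$ of minimum length with prescribed endpoints) together with the freedom in proper $\{1,2,3\}$-colorings, which together allow rearranging the $\phi_m$-extension so that every $E_2$-edge of each $C_j^i$ is absorbed into $p_j^i$ rather than remaining in the complement.
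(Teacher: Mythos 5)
Your reading of the induced coloring (the $p^i_j$ and their brackets in $\{1,2,3\}$, the complementary arcs plus connecting $E_3$-edges concatenating into one $\{4,5\}$-alternating path) matches the paper, and you have correctly located one real difficulty; but the argument as written does not close, and your accounting of $\theta(P^i)$ diverges from the paper's in ways that matter. First, the two $+1$'s: the paper does \emph{not} get them from the end-edges of $p^i_0$ at $u^i_1$ and of $p^i_{t_i}$ at $v^i_{t_i}$. It gets them from the edge $c$ of the complementary arc incident with the \emph{far} end $x$ of $p^i_0$ (resp.\ $p^i_{t_i}$), whose richness and membership in $E_0$ are forced by Property~(2) of the wave (the path $P$ augmented by $c$ is not $\phi_m$-extendable, so all of $1,2,3$ must appear around $c$). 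You never invoke Property~(2), yet it is the engine of the whole end-circuit bound. Second, your sum omits the negative terms: the far end-edge $a$ of $p^i_0$ at $x$, and the edge $c$ itself when $|E(C^i_0)|-|E(p^i_0)|=1$, can each be abnormal and lie in $E_2$, contributing $-1$; the paper handles exactly this by explicit colorings of the length-$3$ and length-$5$ end-circuits and by tracking the quantity $|E(p^i_j)\cap E_0|$. Third, the assertion that every connecting edge $e^i_j$ ``can be made poor by choosing the $\phi_m$-extension so that the two $p$-edges meeting at $u^i_j$ and $v^i_j$ carry the same color'' is unjustified: those colors are constrained by the precolored $E_1$-edges in $\langle P\rangle$ and cannot in general be equalized. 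The paper only forces poorness of $e^i_j$ in two degenerate situations (when $p^i_j$ is a single $E_0$-edge, and in the $t_i=1$ equality case with two triangular end-circuits); otherwise it accepts $\theta(e^i_j)\geq -1$ and compensates with the surplus $|E(p^i_j)\cap E_0|\geq 2$ that every middle circuit provides because both its connection vertices are $E_3$-vertices. Relatedly, your plan produces no analogue of the paper's last step: the natural bound is only $\theta(P^i)\geq t_i$, which equals $1$ when $t_i=1$ and both end-circuits are triangles, and a separate recoloring is needed to reach $2$ there.

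The obstacle you flag --- interior vertices of a complementary arc whose third edge lies in $E_1$, hence is precolored from $\{1,2,3\}$ and sits between two arc-edges colored $4$ and $5$ --- is genuine for the count $|\mathcal{E}(P^i)|=2$ (though not for $\theta$: an $E_2$-edge of the arc is still poor, since its two $E_1$-neighbours belong to the same $M_i$ and so receive the same color). But your proposed repair cannot work as stated: the paths $p^i_j$ are part of the data of the wave, fixed by the algorithm of Lemma~\ref{lem_wave} before the Claim is addressed, and ``rearranging the $\phi_m$-extension'' only changes colors, not which edges of $C^i_j$ lie on $p^i_j$ versus on the complementary arc. Whatever control one has over the location of $E_2$-edges must come from the length-maximality of $p^i_0,p^i_{t_i}$ and Property~(2) (for instance, these already rule out $|E(p^i_0)|=1$ with a long complementary arc, since then $p^i_0+c$ would be extendable), not from re-choosing the extension. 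Without that structural argument, neither $|\mathcal{E}(P^i)|=2$ nor $\theta(P^i)\geq 2$ is established.
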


\begin{proof}
	Let $t_i=d$.
	We will show that if $d\geq 2$ then $\theta(u^i_jv^i_j)+\theta(C^i_j)\geq 1$ for each $j\in \{1,\ldots,d-1\}$.
	Firstly, for each $e\in E(C^i_j)\setminus E(p^i_j)$, we have $e\in E_0\cup E_2$. Again, by the definition of the function $\theta$, if $e\in E_0$ then $\theta(e)\geq 0$, and if $e\in E_2$ then $e$ is poor yielding $\theta(e)=0$. Therefore, $\theta(C^i_j)-\theta(p^i_j)\geq 0$. 
	Moreover, since each edge of $p^i_j$ is either rich or poor, $\theta(p^i_j)=|E(p^i_j)\cap E_0|$.
	Since the value $\theta$ of an edge is at least -1, the conclusion $\theta(u^i_jv^i_j)+\theta(C^i_j)\geq 1$ holds true, provided that $|E(p^i_j)\cap E_0|\geq 2$.
	Hence, we may next assume that $|E(p^i_j)\cap E_0|\leq 1$.
	It follows that $p^i_j$ is just an edge from $E_0$. So we could choose a $\phi_m$-extension of $p^i_j$ so that $u^i_jv^i_j$ is poor. The conclusion holds as well.	

	We next show that $\theta(C^i_0)\geq 1$, while the equality holds only if $C^i_0$ is a triangle.
	Denote by $x$ an end-vertex of $p^i_0$ rather than $u^i_0$, and by $a$, $b$ and $c$ the edges incident with $x$ such that $a\in E(p^i_0)$ and $b\in \langle C^i_0 \rangle$.
	Since $p^i_0$ is a path, $|E(C^i_0)|-|E(p^i_0)|\geq 1$.
	We distinguish two cases.
	
	Case 1: assume that $|E(C^i_0)|-|E(p^i_0)|>1$.	
	Firstly, for each $e\in E(C)\setminus E(p^i_0+c)$, again by the definition of the function $\theta$, if $e\in E_0$ then $\theta(e)\geq 0$, and if $e\in E_2$ then $e$ is poor yielding $\theta(e)=0$. 
	Hence, $\theta(C^i_0-E(p^i_0)-c)\geq 0.$
	Secondly, by Properties (1) and (2) of the wave definiton, $P$ is $\phi_m$-extendable but $P+\{c\}$ is not. Hence, we can deduce that all the colors $1,2,3$ appear on the adjacent edges of $c$, yielding that $c$ is rich and $c\in E_0$. Thus, $\theta(c)=1.$ 
	Finally, since each edge of $p^i_0$ is either rich or poor except the edge $a$,
	we have $\theta(p^i_0)=|E(p^i_0)\cap E_0|-1$. Therefore, we can conclude that 	$\theta(C^i_0)\geq |E(p^i_0)\cap E_0|$.
	By again the length maximality of $p^i_0$, we can deduce that $\langle p^i_0+ c \rangle$ uses at least two kinds of colors.
	It follows that $|E(p^i_0)\cap E_0|\geq 2$ and so, $\theta(C^i_0)\geq 2$.
	
	Case 2: assume that $|E(C^i_0)|-|E(p^i_0)|=1$.
	Now $c$ and $p^i_0$ together form the circuit $C^i_0$.
	Notice that both $a$ and $c$ might be neither rich nor poor.
	We have $\theta(C^i_0)=\theta(c)+\theta(p^i_0)\geq |E(C^i_0)\cap E_0|-2$. Hence, the conclusion holds, provided that $|E(C^i_0)\cap E_0|\geq 4$. We may next assume that $|E(C^i_0)\cap E_0|\leq 3$. It follows that $C^i_0$ is of length either 5 or 3. If $C^i_0$ is of length 5, then $|E(C^i_0)\cap E_0|= 3$. W.l.o.g, see Figure \ref{C5} for the coloring of $C^i_0\cup \langle C^i_0\rangle$, which yields $\theta(C^i_0)=2,$ we are done.
	\begin{figure}[hh]
		\centering
		\includegraphics[width=10cm]{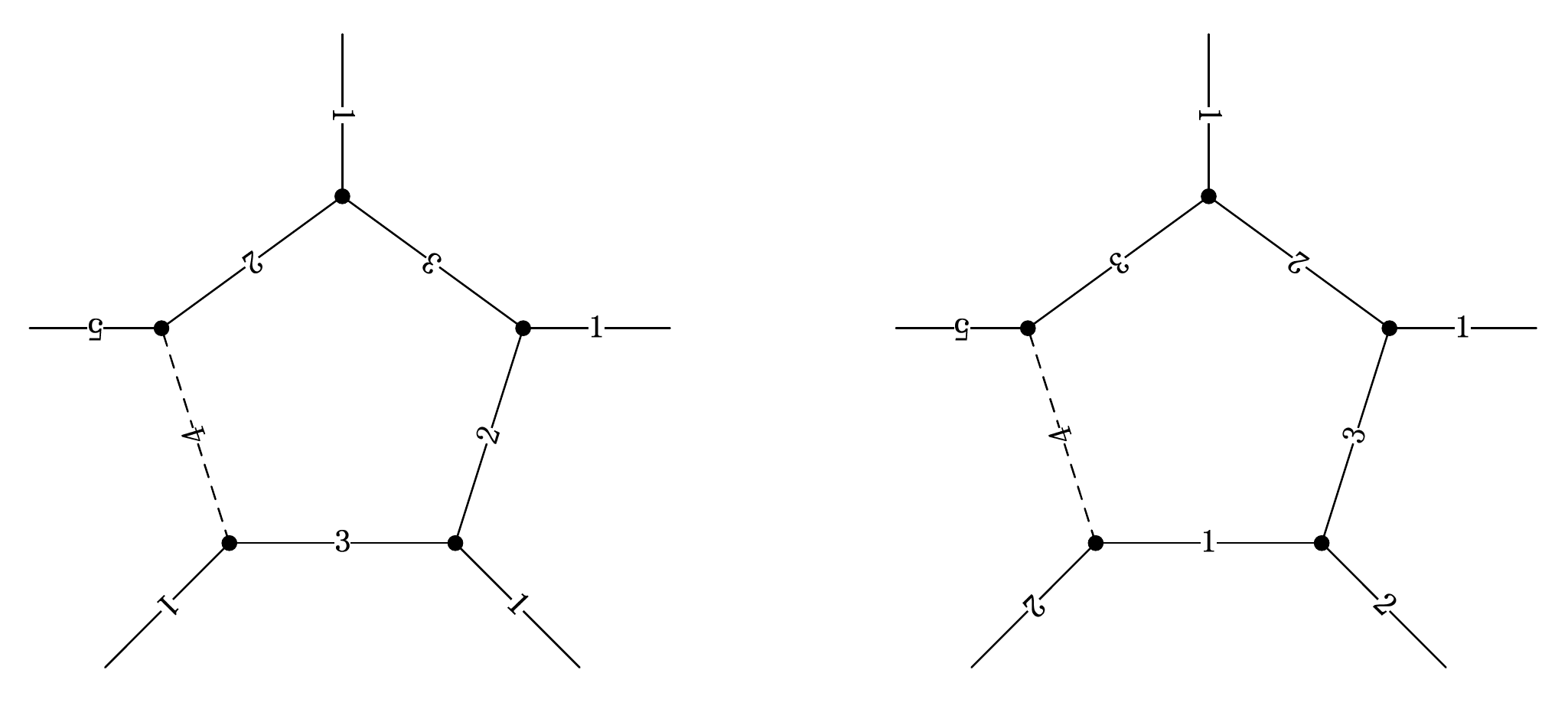}\\
		\caption{A coloring of $\langle C^i_0 \rangle \cup E(C^i_0)$ in two cases. Case 1 (left): $\langle p^i_0 \rangle$ uses one color; case 2 (right): $\langle p^i_0 \rangle$ uses at least two colors.}
		\label{C5}
	\end{figure}
	We may next assume that $C^i_0$ is of length 3, i.e., it is a triangle. 
	It follows that $|E(C^i_0)\cap E_0|\in \{2,3\}$.
	If $|E(C^i_0)\cap E_0|=3$, then $\theta(C^i_0)\geq 1,$ we are done. 
	If $|E(C^i_0)\cap E_0|=2$, without loss of generality, see Figure \ref{C3} for the coloring of $C^i_0\cup \langle C^i_0\rangle$, which yields $\theta(C^i_0)=1,$ we are done as well.
	\begin{figure}[hh]
		\centering
		\includegraphics[width=4cm]{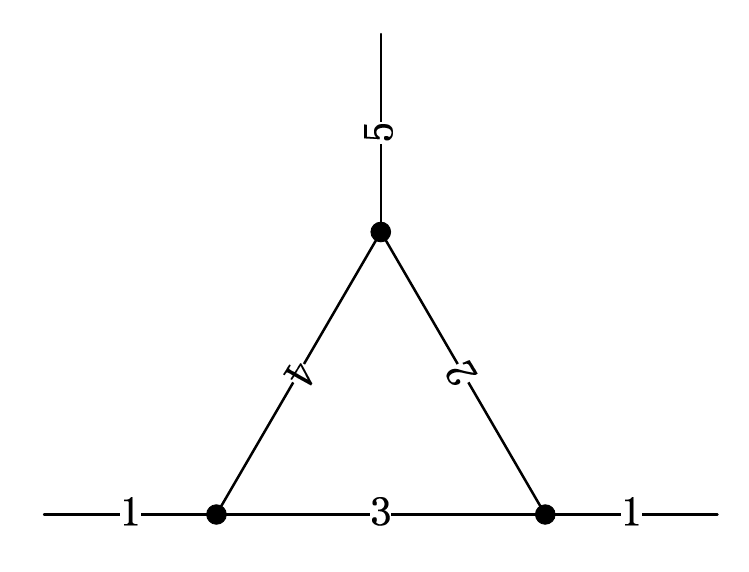}\\
		\caption{A coloring of $C^i_0$.}
		\label{C3}
	\end{figure}
	
	Similarly, we can prove that $\theta(C^i_d)\geq 1$, while the equality holds only if $C^i_d$ is a triangle.
	
	Now we are ready to calculate $\theta(P^i)$, given by 
	\begin{equation}
	\theta(P^i)=
	\theta(C^i_0)+\sum^{d-1}_{j=1}(\theta(u^i_jv^i_j)+\theta(C^i_j))+\theta(u^i_dv^i_d)+\theta(C^i_d)\geq 1+(d-1)-1+1=d\geq 1, \label{eq_string}
	\end{equation}  
	while the equality $\theta(P^i)=1$ holds only if $d=1$ and both $C^i_0$ and $C^i_d$ are triangles.
	Hence, to prove $\theta(P^i)\geq 2$, it suffices to consider the equality case.
	In this case, we can take a $\phi_m$-extension of $P$ so that the edge $u_dv_d$ is poor.
	Following the equation (\ref{eq_string}), we have $\theta(P^i)\geq 2$.
		
	Moreover, let $b'$ be an edge of $\langle C^i_d \rangle$ denoted in a similar way as $b$. From the coloring of $P^i$, it is easy to see that $\mathcal{E}(P^i)=\{b,b'\}.$ So, $|\mathcal{E}(P^i)|=2$. 
\end{proof}

Let $H_3''=H_3 \cap \Omega(G_c)$ and  $H_3'=H_3-H_3''$.
We will color $H_3'$ and $H_3''$ in order.  

For each circuit $C$ of $H_3'$, we add $C$ into $\mathcal{K}$, and we will color $E(C)$ so that $\theta(C)\geq |\mathcal{E}(C)|.$
If $C$ is $\phi_m$-extendable, then $\phi_m$-extend $C$ and consequently, $\theta(C)=|E(C)\cap E_0| > 0= |\mathcal{E}(C)|$.	   
Let us next assume that $C$ is not $\phi_m$-extendable.
Take the longest $\phi_m$-extendable path $q$ on $C$ such that $E_3\cap \langle q\rangle\neq \emptyset$.
Denote by $e_1$ and $e_2$ the two end-edges of $q$ and by $e_i'$ the edge of $E(C)\setminus E(q)$ that is adjacent to $e_i$ for $i\in \{1,2\}$.
Since $C$ is not $\phi_m$-extendable, $|E(C)\setminus E(q)|\geq 1.$ We distinguish three cases.

Case 1: assume that $|E(C)\setminus E(q)|>1$. We $\phi_m$-extend $q$ and assign $E(C)\setminus E(q)$ with colors 4 and 5 alternately. By the choice of $q$, all of the colors $1,2,3$ appear on the adjacent edges of $e_1'$, yielding that $e_1'$ is rich and belongs to $E_0$. Thus $\theta(e_1')=1$. Similarly, we can deduce that $\theta(e_2')=1$. Moreover, since $E_3\cap \langle q\rangle\neq \emptyset$, it follows that $|E(q)\cap E_0|\geq 2$.
Hence, $\theta(C)\geq |E(q)\cap E_0|+\theta(e_1')+\theta(e_2')+\theta(e_1)+\theta(e_2)\geq 2=|\mathcal{E}(C)|.$

Case 2: assume that $|E(C)\setminus E(q)|=1$ and $|E(C)\cap E_0|\geq 5$.
In this case, $e_1'$ and $e_2'$ are the same edge.
We $\phi_m$-extend $q$ and assign $e_1'$ with the color 4. So, $\theta(C)\geq |E(C)\cap E_0|-3\geq 2=|\mathcal{E}(C)|.$

Case 3: assume that $|E(C)\setminus E(q)|=1$ and $|E(C)\cap E_0|\leq 4$.
Clearly, both $|E(C)|$ and $\sigma(C)$ are odd. Since $\sigma(C)\leq |E(C)\cap E_0|$ always holds true,
we have $\sigma(C)\in \{1,3\}$.
If $\sigma(C)=3$, then by the equality $|E(C)|+\sigma(C)=2|E(C)\cap E_0|$, we have $|E(C)|\in \{3,5\}$. Moreover, the property (3) of the wave definition implies that all edges of $\langle C \rangle \cap E_3$ are uncolored. Hence, we can deduce that $C$ is $\phi_m$-extendable, a contradiction.
We may next assume that $\sigma(C)=1$.
By the same equality as above, $|E(C)|\leq 7$.
Moreover, since $C\notin \Omega(G_c)$, we have $|E(C)|\geq 7.$
Hence, $|E(C)|=7$.
Denote by $f$ the unique edge of $E_3\cap \langle C\rangle$.
We proceed in two subcases according to the colors $\langle C \rangle$ receives.

Subcase 3.1: assume that $\langle C \rangle$ uses at most two kinds of colors from $\{1,2,3\}$, say the colors 1 and 2. Assign $f$ with the color 3 and its two adjacent edges on $C$ with the colors 4 and 5, respectively. The remaining edges of $C$ can be properly assigned with colors from $\{1,2,3\}$. One can directly calculate from the coloring that $\theta(C)\geq 2=|\mathcal{E}(C)|.$

Subcase 3.2: assume that $\langle C \rangle$ uses all the colors $1,2,3$. Without loss of generality, see the left of Figure \ref{C7} for the coloring of $\langle C \rangle$. We extend the coloring to $E(C)$ and $f$ as depicted in the right of Figure \ref{C7}. By a direct calculation, $\theta(C)\geq 2=|\mathcal{E}(C)|.$

\begin{figure}[ht]
	\centering
	\includegraphics[width=9cm]{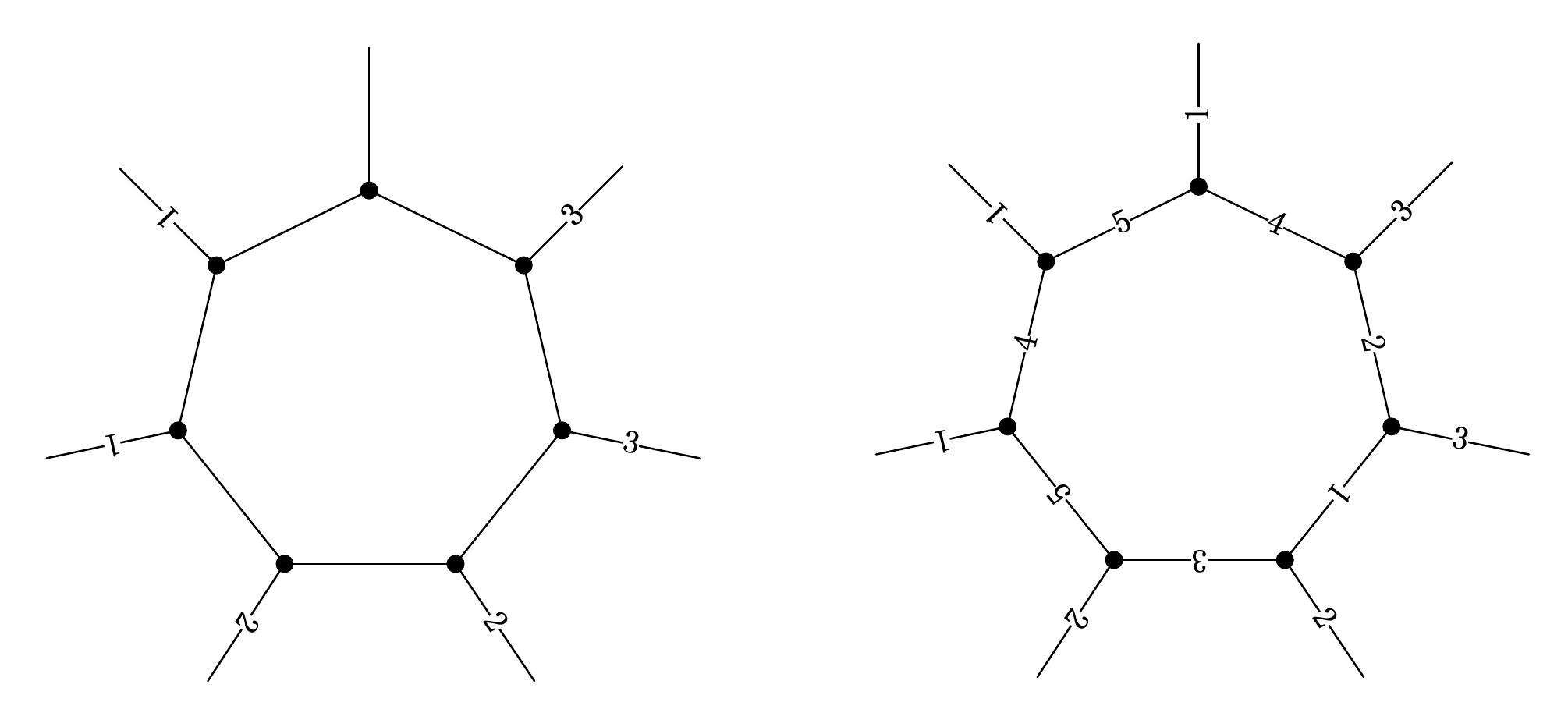}
	\caption{A coloring of the circuit $C$ for subcase 3.2} \label{C7}
\end{figure}

To complete the coloring of $H$, it remains to color the edges of $H_3''$.
Let $\phi_1$ be the current coloring extended from $\phi_m$.
We will color first all pairs of uncolored $\phi_1$-connected circuits and then all pairs of uncolored $G_c$-connected circuits and finally the remaining uncolored circuits of $H_3''$.

Let $C'$ and $C''$ be a pair of uncolored $\phi_1$-connected circuits of $H_3''$. 
Say that $C'=[u'_1\ldots u'_{k'}]$ and $C''=[u''_1\ldots u''_{k''}]$ with $u_1'x', u_1''x''\in E_3$.
Clearly, $k',k''\in\{3,5\}.$
Let $y'$ and $y''$ be the third neighbors of $u_2'$ and $u_2''$, respectively. By Property $(3)$ of the wave definition, $u_1'x'$ and $u_1''x''$ are uncolored, and $x'x''$ has color either 4 or 5.
Assign $u_1'x'$ and $u_1''x''$ with the color of $x'x''$.
Take $\alpha\in \{1,2,3\}\setminus \{\phi_1(u_2'y'),\phi_1(u_2''y'')\}$, and with the color $\alpha$ we assign $u_1'u_2'$ and $u_1''u_2''$ and reassign $x'x''$. Let $\phi_2$ be the resulting coloring.
$\phi_2$-extend the longest $\phi_2$-extendable paths of the form $u_1'u_2'\ldots$ or $u_1''u_2''\ldots$, and properly color the remaining uncolored edges on $C'\cup C''$ with 4 or 5.

If $x'x''\in E(W)$, then let $C_x$ be the component of $W$ containing $x'x''$; otherwise, $x'x''$ is contained in a circuit of $H_1\cup H_3'$, and let $C_x$ be this circuit.
Let $\mathcal{C}$ be the graph consisting of $C_x$, circuits $C'$ and $C''$, and edges $u_1'x'$ and $u_1''x''$.
We substitute $C_x$ for $\mathcal{C}$ in $\mathcal{K}$ and will show that $\theta(\mathcal{C})\geq |\mathcal{E}(\mathcal{C})|$.

We first prove that $\theta(C'),\theta(C'')\geq 1$. 
W.l.o.g., let $\alpha=3$. Recall that $k'\in \{3,5\}$.
If $k'=3$, then from the coloring extension of $C'$ as shown in Figure \ref{C3r}, a direct calculation gives $\theta(C')\geq 1$.
If $k'=5$, then $\langle C'\rangle$ uses either one or two kinds of colors from $\{1,2,3\}$. W.l.o.g., see Figure \ref{C5r} for the coloring extension in three cases. For each case, we can calculate that $\theta(C')\geq 1$ as well.
Similarly, we can prove $\theta(C'')\geq 1$.
\begin{figure}[ht]
	\centering
	\includegraphics[width=7cm]{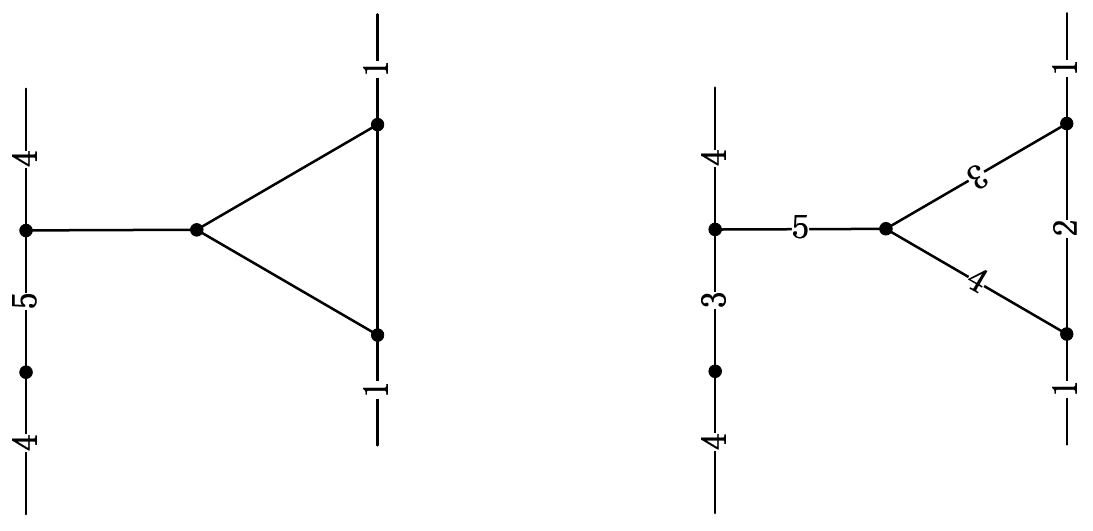}
	\caption{A coloring extension of $C'$ when it has length 3} \label{C3r}
\end{figure}

\begin{figure}[ht]
	\centering
	\includegraphics[width=8cm]{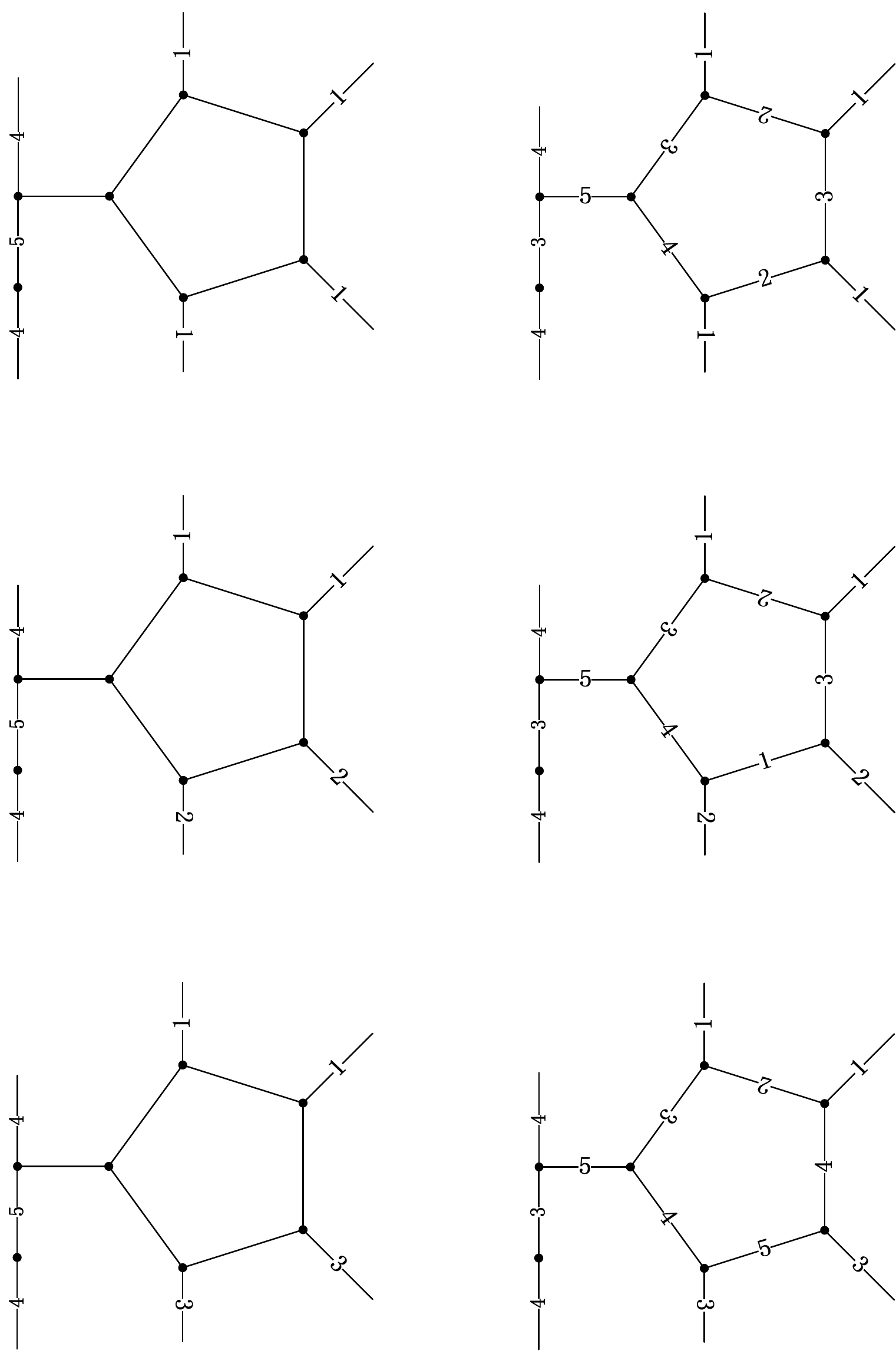}
	\caption{A coloring extension of $C'$ when it has length 5} \label{C5r}
\end{figure}

Denote by $a$ and $b$ the values of $\theta(C_x)$ and $|\mathcal{E}(C_x)|$ before $C'\cup C''$ receives colors, respectively.
We already have the conclusion that $a\geq b.$
Note that $x'u_1'$ and $x''u_1''$ are uncolored edges before $C'\cup C''$ receives colors. 
So by the definition of the function $\theta$,
the coloring of $C'\cup C''$ does not decrease the value $\theta$ of $x'x''$ and of its two adjacent edges locating on $C_x$, and does make $x'u_1'$ and $x''u_1''$ poor.
It follows that $\theta(C_x)\geq a$ and $\theta(x'u_1')=\theta(x''u_1'')=0$.
Thus, $\theta(\mathcal{C})= \theta(C_x)+\theta(C')+\theta(C'')+\theta(x'u_1')+\theta(x''u_1'')\geq a+2.$
Moreover, $\mathcal{E}(\mathcal{C})$ contains two more edges than $\mathcal{E}(C_x)$ (might be the same edge).
Hence, $|\mathcal{E}(\mathcal{C})|\in\{b+1,b+2\}$.
Now we can see that $\theta(\mathcal{C})\geq |\mathcal{E}(\mathcal{C})|$.

Let $C'$ and $C''$ be a pair of uncolored $G_c$-circuits of $H_3''$.
Choose an edge $e\in \langle C'\rangle \cap \langle C'' \rangle \cap E_1$.
Let $\mathcal{C}$ be the graph consisting of $C',C''$ and $e$.
Add $\mathcal{C}$ into $\mathcal{K}$.
If both $C'$ and $C''$ are triangles, then remove the color of $e$ and denote by $\phi_3$ the resulting coloring, and then we can $\phi_3$-extend $\mathcal{C}$. In this case, $\theta(\mathcal{C})=4>\mathcal{E}(\mathcal{C})=0,$ we are done.
So, we may next assume that $C'$ is of length 5. Reassign the color of $e$ with 4 and still denote by $\phi_3$ the resulting coloring.
If $C'$ is $\phi_3$-extendable, then we $\phi_3$-extend it;
otherwise, we can $\phi_3$-extend $C'-e'$, where $e'$ is an edge of $C'$ adjacent to $e$. 
Do the same to the cycle $C''$. Finally, assign $e'$ and $e''$ with the color 5 if they exist.
By the resulting coloring, a direct calculation gives $\theta(\mathcal{C})\geq |\mathcal{E}(\mathcal{C})|$.

We can see that $\theta(k)\geq |\mathcal{E}(k)|$ holds for each component $k$ of $\mathcal{K}$.
By taking the sum over $k$, we have 
\begin{equation}\label{eq_theta_k_final}
\theta(\mathcal{K})=\sum_{k\in \mathcal{K}}\theta(k)\geq \sum_{k\in \mathcal{K}}|\mathcal{E}(k)|\geq |\mathcal{E}(\mathcal{K})|.
\end{equation}

Let $T$ be all the remaining uncolored circuits of $H_3''$. 
To complete the coloring $\phi_m'$ of the whole graph $G$, we will first color all the uncolored edges in $E_3\setminus \langle T\rangle$, and then color $T$ and $E_3\cap \langle T\rangle$. 

For each uncolored edge $e$ of $E_3\setminus \langle T\rangle$, the four edges adjacent to $e$ are already colored. We can properly assign $e$ with a color from $\{1,2,\ldots,5\}$. Denote by $\phi_4$ the resulting coloring.

Let $\overline{T'}=G-T\cup \langle T\rangle$.
We will show that $\theta(\overline{T'})\geq 0$.
Let $e$ be an edge of $\overline{T'}-\mathcal{K}\cup \mathcal{E}(\mathcal{K})$
and $u$ be an end of $e$.
If $u$ locates on $T\cup \mathcal{K}$, since $e\notin \langle T\rangle\cup \mathcal{E}(\mathcal{K})$, $u$ locates on $\mathcal{K}$ and $e$ is $\phi_4$-good on $u$;
otherwise, since $T\cup \mathcal{K}$ has the same vertex set as $G_c$, the three edges around $u$ receive colors by $\phi_m$ and stand with them during all the previous coloring extensions, yielding that $e$ is $\phi_4$-good on $u$ as well.
By the choice of $e$ and $u$, we can conclude that 
\begin{equation}\label{eq_theta_T}
\theta(\overline{T'}-\mathcal{K}\cup \mathcal{E}(\mathcal{K}))\geq 0.
\end{equation} 
Moreover, notice that $T\cup \langle T\rangle$ and $\mathcal{K}\cup \mathcal{E}(\mathcal{K})$ may have common edges, which apparently belong to $\langle T\rangle\cap \mathcal{E}(\mathcal{K})$.
Therefore,
$\theta(\overline{T'})=\theta(\overline{T'}-\mathcal{K}\cup \mathcal{E}(\mathcal{K}))+\theta(\mathcal{K}) +\theta(\mathcal{E}(\mathcal{K})-\langle T\rangle\cap \mathcal{E}(\mathcal{K}))\geq 0+|\mathcal{E}(\mathcal{K})|-|\mathcal{E}(\mathcal{K})-\langle T\rangle\cap \mathcal{E}(\mathcal{K})|\geq 0,$
where the first inequality follows by equations (\ref{eq_theta_k_final}) and (\ref{eq_theta_T}) and the fact that the value $\theta$ of an edge is at least $-1$.
 
It remains to color $T$ and $E_3\cap \langle T\rangle$.
For each circuit $C$ of $T$, we will color $C$ so that  $\theta_{\phi_5}(\overline{T'}\cup C \cup \langle C \rangle)\geq 0$ for the resulting coloring $\phi_5$.
Say that $C$ is of length $k$ and of vertices $u_1,\ldots,u_k$ in cyclic order.
For $1\leq i\leq k$, denote by $v_i$ the neighbor of $u_i$ not on $C$.
Recall that $C\in \Omega(G_c)$. So, $k\in \{3,5\}$ and $\langle C\rangle \cap E_3$ contains precisely one edge, say $e=u_1v_1$.
Let $e_1$ and $e_2$ be other two edges around $v_1$.
Property $(3)$ of the wave definition implies that $e$ is still uncolored and that $e_1$ and $e_2$ are of colors 4 and 5.
If $e_1$ is adjacent to an uncolored edge $e'$ rather than $e$, then $e'\in E_3\cup \langle C' \rangle$ for some $C'\in T$, yielding that $C$ and $C'$ are $\phi_1$-connected circuits of $T$, a contradiction.  
Hence, $e$ is the only uncolored edge adjacent to $e_1$ or $e_2$.
For $i\in\{1,2\}$ let $\gamma_i$ be the color making $e_i$ normal if $e$ receives it.
Such $\gamma_i$ always exists and $\gamma_i\in \{1,2,3\}$.
We distinguish two cases.

Case 1: assume that $\langle C\rangle$ uses one same color, say the color 1.

Subcase 1.1: assume that at least one of $\gamma_1$ and $\gamma_2$ is not color 1, say $\gamma_1=2$. 
If $k=3$, then assign the edges $e,u_1u_2,u_2u_3,u_3u_1$ with colors $2,4,3,5$ respectively; otherwise, assign $e,u_1u_2,u_2u_3,u_3u_4,u_4u_5,u_5u_1$ with $2,4,3,2,3,5$, respectively. 
Since the coloring of $e$ makes $e_1$ from a $\phi_4$-outer edge to a normal edge, it increases the value $\theta(e_1)$ by 1. Moreover, a direct calculation gives $\theta(C\cup \langle C \rangle)\geq -1$. Therefore,  $\theta_{\phi_5}(\overline{T'}\cup C \cup \langle C \rangle)\geq 0$ holds.

Subcase 1.2: assume that $\gamma_1=\gamma_2=1$ and that not both $v_2$ and $v_k$ are incident with edges of color 2 and of color 3. W.l.o.g., let $v_2$ be incident with no edges of color 2. Reassign $u_2v_2$ with color 2 and consequently, we can $\phi_4$-extend $C$.
Since $T$ contains no $G_c$-connected circuits, the color reassigning of $u_2v_2$ makes no changes to the coloring of $\langle C \rangle$ for any other choice of $C$.
On one hand, a direct calculation gives $\theta(C\cup \langle C \rangle)=2$ if $k=3$, and $\theta(C\cup \langle C \rangle)=3$ otherwise.
On the other hand, reassigning $u_2v_2$ might decreases $\theta$ on other two edges around $v_2$ by at most 1 for each. 
Therefore, $\theta_{\phi_5}(\overline{T'}\cup C \cup \langle C \rangle)\geq 0$ holds.

Subcase 1.3: assume that $\gamma_1=\gamma_2=1$ and that both $v_2$ and $v_3$ are incident with edges of color 2 and of color 3. If $k=3$, then reassign $v_2u_2$ and $v_3u_3$ with color 4 and color 5, respectively, and assign $e,u_1u_2,u_2u_3,u_1u_3$ with colors $1,5,1,4,$ respectively; 
otherwise, reassign $u_2v_2$ with color $4$ and assign $e,u_1u_2,u_2u_3,u_3u_4,u_4u_5,u_5u_1$ with colors $1,5,3,2,3,2$, respectively.
For the former case, we increase the value $\theta$ of $e_1$ and $e_2$ by 1 for each and decrease $\theta$ of the other four edges around $v_2$ or $v_3$ by at most 1 for each. Moreover, $\theta(C\cup \langle C \rangle)=2$ by a direct calculation. Hence,  $\theta_{\phi_5}(\overline{T'}\cup C\cup \langle C \rangle)\geq 0$. For the latter case, we increase the value $\theta$ of $e_1$ and $e_2$ by 1 for each and decrease $\theta$ of the other two edges around $v_2$ by at most 1 for each. Moreover, $\theta(C\cup \langle C \rangle)=0$. Hence,  $\theta_{\phi_5}(\overline{T'}\cup C\cup \langle C \rangle)\geq 0$ holds as well.

Case 2: assume that $\langle C\rangle$ uses two kinds of colors, say the colors 1 and 2.
It follows that $k=5$.
Assign $e$ the same as $u_2v_2$ and assign 
$u_1u_2,u_2u_3,$
$u_3u_4,u_4u_5,u_5u_1$ with colors $4,5,4,3,5$, respectively.
A direct calculation gives $\theta(C\cup \langle C \rangle)\geq 0$. 
Therefore, $\theta_{\phi_5}(\overline{T'}\cup C\cup \langle C \rangle)\geq 0$.

Now we complete the coloring $\phi_m'$ of $G$ such that $\theta_{\phi_m'}(G)\geq 0$.
We are done with the proof of the theorem.

\section{Acknowledgment}
The authors are grateful to professor Eckhard Steffen for his helpful discussion on the first draft of the paper, and also to an annonymous referee for pointing out a gap of the proof of a lemma in an earlier version of the paper.
The first author is supported by NSFC (Grant number: 11801522).
The second author is supported by NSFC (Grant number: 11901258).

\end{document}